\theoremstyle{plain}
\newtheorem{theorem}{Theorem}[section]
\newtheorem{lemma}{Lemma}[section]
\newtheorem{cor}{Corollary}[section]
\newtheorem{claim}{Claim}[section]
\newtheorem{thmout}{Theorem}
\theoremstyle{definition}
\theoremstyle{definition}
\newtheorem*{definition}{Definition}
\newtheorem*{remark}{Remark}
\definecolor{myred}{RGB}{226,56,18}
\definecolor{myorange}{RGB}{228,139,0}
\definecolor{mygreen}{RGB}{4,215,17}
\definecolor{mygrey}{RGB}{180,180,180}
\DeclareMathOperator{\Mod}{mod}
\def\Cx{\mathbb{C}}
\def\Chat{\widehat{\mathbb{C}}}
\def\dist{\mathrm{dist}}
\begin{document}

\title{\textbf{\textsc{multiply connected wandering domains of meromorphic functions: internal dynamics and connectivity}}}

\author{Gustavo R.~Ferreira\thanks{Email: \texttt{gustavo.rodrigues-ferreira@open.ac.uk}}\\
  \small{School of Mathematics and Statistics, The Open University}\\
  \small{Milton Keynes, MK7 6AA, UK}
}

\maketitle

\begin{abstract}
We discuss how the nine-way classification scheme devised by Benini \textit{et al.} for the dynamics of simply connected wandering domains of entire functions, based on the long-term behaviour of the hyperbolic distance between iterates of pairs of points and also the distance between orbits and the domains' boundaries, carries over to the general case of multiply connected wandering domains of meromorphic functions. Most strikingly, we see that not all pairs of points in such a wandering domain behave in the same way relative to the hyperbolic distance, and that the connectivity of the wandering domain greatly influences its possible internal dynamics. After illustrating our results with the well-studied case of Baker wandering domains, we further illustrate the diversity of multiply connected wandering domains in general by constructing a meromorphic function with a wandering domain without eventual connectivity. Finally, we show that an analogue of the ``convergence to the boundary'' classification of Benini \textit{et al.} does hold in general, and add new information about how this convergence takes place.

\end{abstract}


\section{Introduction}
Let $f:\Cx\to\Chat$ be a meromorphic function. In the study of its iterates, one sees the complex plane divided into two completely invariant subsets: the \textit{Fatou set}, where the family $\{f^n\}_{n\in\mathbb{N}}$ is normal, and its complement the \textit{Julia set}. The Fatou set, usually denoted $F(f)$, is -- by definition -- open, and its connected components (called Fatou components) are mapped into one another by $f$. This, in turn, separates Fatou components into two types: those that are eventually periodic, and whose behaviour was classified by Fatou himself a century ago \cite{Fat20}, and \textit{wandering domains} -- which have been the focus of increasingly intensive research since Baker first proved their existence in the 1970s \cite{Bak76}.

Wandering domains are astounding in their diversity: they can be simply or multiply connected, escaping or oscillating (the existence of wandering domains with only finite limit functions is still an open problem), bounded or unbounded, and they can even combine these characteristics along the same orbit. Regarding multiply connected ones (which are our goal here), Bergweiler, Rippon and Stallard \cite{BRS13} focused on the case of entire functions and obtained a detailed description of their geometric properties and internal dynamics: their iterates contain increasingly large ``absorbing'' annuli, which every orbit in the domain eventually enters.

However, if one considers meromorphic functions, not every multiply connected wandering domain can be treated with the same techniques used by Bergweiler, Rippon and Stallard. To deal with this diversity of wandering domains, we concentrate on a different aspect of the internal dynamics and the connectedness properties of the wandering domain.

In a recent paper, Benini \textit{et al.} \cite{BEFRS19} did exactly that: they devised a nine-way classification scheme for simply connected wandering domains based on the long-term behaviour of orbits relative to each other and relative to the boundary of the wandering domains. More specifically, let $z$ and $w$ be points in a simply connected wandering $U\subset F(f)$, and let $U_n$ denote the Fatou component containing $f^n(U)$ for any $n\in\mathbb{N}$. Then, one can consider the behaviour of the sequences
\[ \text{$d_{U_n}(f^n(z), f^n(w))$ and $\dist(f^n(z), \partial U_n)$}, \]
where $d_\Omega$ denotes hyperbolic distance in the domain $\Omega$; one of the most important findings of \cite{BEFRS19} is that \textit{all} points in $U$ behave in the same way concerning these sequences, which in turn allows us to neatly classify simply connected wandering domains according to such behaviour. Our primary aim for this paper, then, is to determine if their scheme is applicable to multiply connected wandering domains. Before stating our results, however, we would like to outline theirs. Regarding the hyperbolic metric (which is where we will see the biggest changes compared to the simply connected case) in particular, they proved the following.

\begin{thmout} \label{thmA}
Let $U$ be a simply connected wandering domain of a transcendental entire function $f$, and define the countable set of pairs
\[ E := \{(z, z')\in U\times U : f^k(z) = f^k(z')\text{ for some }k\in\mathbb{N}\}. \]
Then, exactly one of the following holds.
\begin{enumerate}[(i)]
    \item $d_{U_n}\left(f^n(z), f^n(w)\right)\to 0$ for all  $z, w\in U$, and we say that $U$ is \emph{contracting};
    \item $d_{U_n}\left(f^n(z), f^n(w)\right)\to c(z, w) > 0$ and $d_{U_n}\left(f^n(z), f^n(w)\right) > c(z, w)$ for all $z, w\in (U\times U)\setminus E$, and we say that $U$ is \emph{semi-contracting};
    \item There exists $N\in\mathbb{N}$ such that, for every pair $(z, w)\in (U\times U)\setminus E$, $d_{U_n}\left(f^n(z), f^n(w)\right) = c(z, w) > 0$ for $n\geq N$, and we say that $U$ is \emph{eventually isometric}.
\end{enumerate}
\end{thmout}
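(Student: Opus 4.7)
The starting point is the Schwarz--Pick lemma: since each $f|_{U_n}\colon U_n\to U_{n+1}$ is a holomorphic map between simply connected hyperbolic domains, the sequence $d_n(z,w):=d_{U_n}(f^n(z),f^n(w))$ is non-increasing in $n$ and bounded below by $0$, hence converges to some $c(z,w)\in[0,d_U(z,w)]$. The strict form of Schwarz--Pick moreover produces a sharp dichotomy at each step: either $f|_{U_n}$ is a conformal isomorphism onto $U_{n+1}$, in which case $d_{n+1}(z,w)=d_n(z,w)$ for every pair; or $f|_{U_n}$ is strictly distance-decreasing on every pair with $f^n(z)\neq f^n(w)$.

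Case (iii) then isolates itself: if $f|_{U_n}$ is a conformal isomorphism for every $n\geq N$, then $d_n(z,w)=d_N(z,w)$ for $n\geq N$, with $d_N(z,w)>0$ precisely when $(z,w)\notin E$. Otherwise, strict decrease occurs at infinitely many steps, so $d_n(z,w)>c(z,w)$ for every $n$ and every non-coincident pair, ruling out (iii) and placing us in (i) or (ii).

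To distinguish (i) from (ii) uniformly in $(z,w)$, I would transfer to the unit disk via a normal-families argument. Fix a basepoint $z_*\in U$ and choose Riemann maps $\phi_n\colon\mathbb{D}\to U_n$ so that $\phi_n^{-1}(f^n(z_*))=0$. Setting $H_n:=\phi_n^{-1}\circ f^n\circ\phi_0$ and $\zeta_*:=\phi_0^{-1}(z_*)$, one obtains a sequence of self-maps of $\mathbb{D}$ with $H_n(\zeta_*)=0$. By Montel's theorem $\{H_n\}$ is normal, and the anchoring at $\zeta_*$ prevents any subsequential limit from being a boundary constant. Since $d_n(z,w)=d_\mathbb{D}(H_n(\phi_0^{-1}(z)),H_n(\phi_0^{-1}(w)))$ already converges to $c(z,w)$, every subsequential limit $H\colon\mathbb{D}\to\mathbb{D}$ realises the same pairwise hyperbolic distances. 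So either every subsequential limit is the constant $0$---equivalently $H_n\to 0$ locally uniformly---forcing $c\equiv 0$ and placing us in case (i); or some subsequential limit $H$ is non-constant, in which case $c(z,w)=d_\mathbb{D}(H(\phi_0^{-1}(z)),H(\phi_0^{-1}(w)))$.

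The hard part will be the remaining rigidity in the second sub-case: showing that $H$ introduces no spurious identifications beyond those recorded by $E$, so that $c(z,w)>0$ for \emph{every} $(z,w)\notin E$. This is where I expect the real work to lie, presumably through a dedicated lemma on compositions of holomorphic self-maps of $\mathbb{D}$ that leverages the per-step Schwarz--Pick dichotomy together with the fact that, for $(z,w)\notin E$, the images $H_n(\phi_0^{-1}(z))$ and $H_n(\phi_0^{-1}(w))$ remain distinct for every $n$.
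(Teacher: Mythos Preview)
This theorem is not proved in the present paper: it is quoted as Theorem~A from Benini \textit{et al.}~\cite{BEFRS19} and serves only as background for the paper's own results on multiply connected wandering domains. There is therefore no proof in this paper against which to compare your proposal.

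That said, your outline is broadly the shape of the argument in \cite{BEFRS19}: Schwarz--Pick gives monotonicity of $d_n(z,w)$ and the per-step dichotomy between local isometry and strict contraction; one then uniformises via Riemann maps to $\mathbb{D}$ and studies the sequence of induced self-maps by normal families. You have correctly flagged the genuine difficulty: in the non-constant-limit case, one must rule out new coincidences in the limit map $H$ beyond those already in $E$. This does not follow from Schwarz--Pick alone, and your proposal stops short of a mechanism for it. In \cite{BEFRS19} this is handled by a separate lemma on non-autonomous compositions of holomorphic self-maps of $\mathbb{D}$ (their hyperbolic-distortion estimate), which shows that if the limit of hyperbolic distances is zero for one pair off $E$, then it is zero for all pairs. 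Without such a lemma your trichotomy is not yet exclusive: as written, nothing prevents some pairs $(z,w)\notin E$ from having $c(z,w)=0$ while others have $c(z,w)>0$.
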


As concerns Theorem \ref{thmA}, we see that things are \textit{not} the same for multiply connected wandering domains: it is not true that all points in a wandering domain $U$ behave in the same way relative to $d_{U_n}$, and in fact even ``commonplace'' examples can combine different long-term behaviours of the hyperbolic metric. That is not to say, however, that we are lost at sea; even these ``mixed-type'' wandering domains can still exhibit interesting structures. We postpone a discussion of what this means until after Theorem \ref{thm:cutout} below; here, we make the following distinction.

\begin{definition}
Let $U$ be a multiply connected wandering domain of a transcendental meromorphic function $f$. We say that $U$ is \textit{trimodal} (respectively, \textit{bimodal}) if it exhibits all (resp. two out of three) possible behaviours described in Theorem \ref{thmA}.
\end{definition}

Another aspect of multiply connected wandering domains that helps us in classifying their internal dynamics is their connectivity $c(U)$, defined as the number of connected components of $\Chat\setminus U$ (which, of course, was a non-issue in the simply connected case). Recall that, as defined by Kisaka and Shishikura \cite{KS08}, the \textit{eventual connectivity} of a wandering domain $U$ (if it exists) is the number $k\in\mathbb{N}\cup\{+\infty\}$ such that $c(U_n) = k$ for all large $n$.  Notice that it often does exist -- Kisaka and Shishikura also showed that multiply connected wandering domains of entire functions always have an eventual connectivity, and it is either two or infinity. Our first result (proved in Section \ref{sec:hyp}) shows that a wandering domain's geometry and eventual connectivity severely restrict its possible internal dynamics.

\begin{theorem} \label{thm:cutout}
Let $U$ be a wandering domain of the meromorphic function $f$. Assume that $U$ has finite eventual connectivity $k$. Then,
\begin{enumerate}[(i)]
    \item if $k\geq 3$, $U$ is eventually isometric;
    \item if $k = 2$ and $\deg f|_{U_n}$ is finite for all large $n$, either
    \begin{enumerate}[(a)]
        \item $\Mod U_n$ is constant\footnote{For a definition of the modulus of $U_n$, see Subsection \ref{ssec:model}.} and $U$ is eventually isometric, or
        \item $\Mod U_n\to +\infty$ and $U$ is trimodal and admits contracting and eventually isometric transversal laminations.
    \end{enumerate}
\end{enumerate}
\end{theorem}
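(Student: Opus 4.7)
The plan is to combine the Schwarz--Pick lemma, the Riemann--Hurwitz formula, and the conformal geometry of multiply connected planar domains, splitting according to the connectivity $k$. Recall that a non-constant holomorphic map between hyperbolic Riemann surfaces is a local isometry of the hyperbolic metrics (equivalently, it achieves equality in the Schwarz--Pick inequality at a single pair of points) precisely when it is a covering map; moreover, equality for \emph{every} pair of points forces the map to be a conformal isomorphism, since otherwise non-trivial deck transformations provide shortcuts. Hence establishing eventual isometry reduces to showing that $f|_{U_n}:U_n\to U_{n+1}$ is a conformal isomorphism for all sufficiently large~$n$.

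For part~(i), fix $k\geq 3$. The first step is to show that for all large $n$ the map $f|_{U_n}$ is a proper holomorphic surjection onto $U_{n+1}$; this is the step I expect to be the main obstacle, as properness is not assumed in~(i). The intuition is that a transcendental singularity of $f$ (for example, an asymptotic value) landing in $U_{n+1}$ would impose extra boundary/topological structure on $U_n$ incompatible with the stabilised connectivity~$k$. Once properness is in hand, Riemann--Hurwitz applied to the degree-$d_n$ proper map $f|_{U_n}$ reads
\[
2-k \;=\; d_n(2-k) \;-\; N_c,
\]
with $N_c\geq 0$ the total ramification; rearranging gives $N_c=(d_n-1)(2-k)$, which is strictly negative for any $d_n\geq 2$ whenever $k\geq 3$. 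Hence $d_n=1$, each $f|_{U_n}$ is a conformal isomorphism, and Schwarz--Pick yields that $U$ is eventually isometric.

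For part~(ii), with $k=2$ and $\deg f|_{U_n}$ finite by hypothesis, the same calculation with $\chi=0$ on both sides forces $N_c=0$, so every $f|_{U_n}$ is an unbranched covering between annuli of some degree $d_n\geq 1$. Consequently $\Mod U_{n+1}=d_n\cdot\Mod U_n$, and the modulus sequence is non-decreasing. If it stays bounded it must stabilise at some value $M$, forcing $d_n=1$ for all large $n$; then each $f|_{U_n}$ is a conformal isomorphism, which is subcase~(a) and gives eventual isometry as in part~(i). Otherwise $\Mod U_n\to\infty$, which is subcase~(b). Here I would uniformise each $U_n$ as a round annulus and pass via $z=e^w$ to the universal-cover vertical strip, where $f$ lifts to an affine map $w\mapsto d_n w + c_n$ between strips whose widths scale by the factor $d_n$. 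The foliations of $U_n$ by core geodesics and by transverse radial arcs are each preserved by $f$ and are manifestly transverse to each other. Along a radial leaf the lifted affine map preserves the strip's hyperbolic metric (the derivative-factor $d_n$ is cancelled by the scaling of the strip width, and no deck-transformation shortcut is available between points at the same imaginary height), giving the eventually isometric transversal lamination. Along a core leaf, by contrast, $f$ wraps the geodesic $d_n$ times around the target core; since the hyperbolic length of the core circle equals $\pi/\Mod U_n$, which tends to zero, any pair of distinct orbit points sharing a core leaf has distance driven to zero, giving the contracting transversal lamination. Generic orbit pairs, lying on no common leaf of either lamination, fill out the semi-contracting regime, and so $U$ is trimodal.

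The decisive difficulty is therefore the properness step in part~(i); the remaining ingredients, namely the Riemann--Hurwitz bookkeeping and the explicit hyperbolic geometry of uniformised thin annuli, are routine once that hurdle has been cleared.
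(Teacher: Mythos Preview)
Your overall strategy matches the paper's: Riemann--Hurwitz forces degree one when $k\geq 3$, and for $k=2$ one conjugates the unbranched coverings to power maps between round annuli and reads off the two transversal laminations from the circle/ray foliations. So the architecture is right.

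However, there is a genuine gap exactly where you flag it: properness of $f|_{U_n}$ in part~(i). Your intuition about a transcendental singularity forcing ``extra boundary structure'' is pointing in the right direction but is not an argument. The paper closes this gap with a result of Bolsch (proved via Ahlfors' theory of covering surfaces), recorded as Lemma~\ref{lem:bolsch}: for a component $U_n$ of $f^{-1}(U_{n+1})$, either $f|_{U_n}$ is proper of finite degree (and Riemann--Hurwitz holds), or $f|_{U_n}$ has infinite degree, in which case $c(U_{n+1})\geq 3$ forces $c(U_n)=+\infty$. Since $c(U_n)=k<\infty$ for large $n$ and $k\geq 3$, the infinite-degree alternative is impossible, and properness follows immediately. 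You should invoke (or reprove) this dichotomy; without it, the Riemann--Hurwitz computation in~(i) is not justified.

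Two smaller points on part~(ii). First, the theorem asserts laminations on $U$ itself, not on $U_N$. Your foliations live on $U_N$ (where the maps are already unbranched); to obtain structures on $U$ you must pull back through $f^N:U\to U_N$, and this map can have critical points, which is precisely why the result speaks of \emph{laminations} rather than foliations---the foliation structure breaks at $\mathrm{Crit}(f^N|_U)$. The paper makes this explicit. Second, your parenthetical ``achieves equality in the Schwarz--Pick inequality at a single pair of points'' characterisation of covering maps is not correct as stated for the distance form; the clean statement is that equality in the \emph{infinitesimal} Schwarz--Pick inequality at one point forces a covering. This does not affect the logic of your argument, since in~(i) you actually deduce bijectivity from $d_n=1$, but the aside should be tightened.
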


The laminations in the theorem above encode the aforementioned ``structure'' in mixed-type domains. A precise definition is the following.

\begin{definition}
Let $U$ be a wandering domain of the transcendental meromorphic function $f$.
\begin{itemize}
    \item We say that $U$ admits a \textit{contracting lamination} if there exists a lamination $\mathcal{C}$ of $U$ such that
    \[ \text{$d_{U_n}\left(f^n(z), f^n(w)\right)\to 0$ for all $z$ and $w$ on the same leaf of $\mathcal{C}$.} \]
    \item We say that $U$ admits an \textit{eventually isometric lamination} if there exists a lamination $\mathcal{L}$ of $U$ such that, for all large $n$,
    \[ \text{$d_{U_n}\left(f^n(z), f^n(w)\right) = c(z, w) > 0$ for every $z$ and $w$ on the same leaf of $\mathcal{L}$.} \]
\end{itemize}
\end{definition}

We will see that, if $U$ admits both a contracting \textit{and} an eventually isometric lamination, then points $z, w\in U$ that are not on the same leaf for either lamination behave ``semi-contractingly'' -- i.e.,
\[ \text{$d_{U_n}\left(f^n(z), f^n(w)\right)\searrow c(z, w) > 0$ as $n\to +\infty$.} \]
In other words, a wandering domain admitting both laminations is automatically trimodal (see Section \ref{sec:hyp} and Figure \ref{fig:laminations}).

Theorem \ref{thm:cutout} can be thought of as a ``silhouette theorem'': just by knowing the geometry of the iterates of a wandering domain, we can (in some cases) predict their internal dynamics. The hypothesis of finite eventual connectivity is necessary, since infinite connectivity offers, in general, sufficient flexibility for many kinds of behaviours (compare, for instance, \cite[Example 1]{RS08} and \cite[Theorem (iii)]{BKL90}). However, in a well-studied case, both finite and infinite connectivity are associated to particular internal dynamics.

To understand what this particular case is, we recall the definition of a \textit{Baker wandering domain}. A multiply connected wandering domain $U$ is said to be a Baker wandering domain if, for all large $n$, $U_n$ surrounds the origin and $U_n\to\infty$ as $n\to+\infty$ (see \cite{RS11} for a discussion of this and other closely related types of wandering domains). If the function in question is entire, every multiply connected wandering domain is of this kind, and this remains true if we allow the function to have finitely many poles (see \cite{RS08}). An even larger class of functions with Baker wandering domains with similar properties is the class of transcendental meromorphic functions with a direct tract (see Section \ref{sec:dir} for a definition). As we will see in Section \ref{sec:dir}, all Baker wandering domains in this larger class have asymptotic behaviour similar to that of Baker wandering domains of entire functions, and -- as our next ``silhouette theorem'' shows -- have very rigid internal dynamics.

\begin{theorem} \label{thm:Baker}
Let $f$ be a transcendental meromorphic function with a direct tract $D$ and a Baker wandering domain $U$. Then,
\begin{enumerate}[(i)]
    \item $U$ has eventual connectivity either two or infinity;
    \item $U$ admits a contracting lamination $\mathcal{C}$ made of level sets of a harmonic function, and
    \begin{enumerate}[(a)]
        \item if $c(U) < +\infty$, then $U$ is trimodal. More specifically, it also admits an eventually isometric lamination $\mathcal{L}$ transversal to $\mathcal{C}$.
        \item if $c(U) = +\infty$, then $U$ is bimodal. More specifically, points on different leaves of $\mathcal{C}$ behave ``semi-contractingly''.
    \end{enumerate}
\end{enumerate}
\end{theorem}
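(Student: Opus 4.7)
The plan is to combine the geometric structure of Baker wandering domains in direct tracts with Theorem~\ref{thm:cutout} for case (a), and to supplement this with a direct hyperbolic-metric argument for case (b). The unifying object is a harmonic function on each $U_n$, attached naturally to the direct-tract structure of $f$, whose level sets yield the contracting lamination $\mathcal{C}$.

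For part (i), I would first invoke the geometric description of Baker wandering domains in the direct-tract setting (Section~\ref{sec:dir}): eventually $U_n \subset D$, and the iterates contain round annuli around the origin whose moduli diverge. Inside $D$ the map $f\colon D \to \{|w|>R\}$ is a proper holomorphic (covering) map, so the classical Kisaka--Shishikura argument for entire functions adapts essentially verbatim: the connectivity $c(U_{n+1})$ is controlled by the combinatorics of $f^{-1}(\partial U_{n+1}) \cap U_n$, and the fact that the orbit is of Baker type (so $U_n$ surrounds $0$) forces the eventual value of $c(U_n)$ to be either $2$ or $+\infty$.

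For part (ii), define $u(z) := \log|f(z)| - \log R$ on $D$, a positive harmonic function vanishing on $\partial D$. For $n$ large enough that $U_n \subset D$, the restriction $u|_{U_n}$ is harmonic, and from the annular geometry its level sets form a foliation of $U_n$ by simple closed curves separating $\partial U_n$ from $\infty$. Pulling back via $f^n$ gives a lamination of $U$ by level sets of the harmonic function $u \circ f^n$; the compatibility $u \circ f^{n+1} = (u\circ f)\circ f^n$, together with the properness of $f\colon U_n \to U_{n+1}$, shows that the resulting lamination stabilises. Call it $\mathcal{C}$. Two points $z,w$ on the same leaf of $\mathcal{C}$ have iterates lying on a common level curve of $u$ in $U_n$, which sits inside a round annulus whose modulus tends to $+\infty$; standard estimates for the hyperbolic metric on a wide annulus then give $d_{U_n}(f^n(z),f^n(w)) \to 0$, as required.

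For case (a), finite connectivity and part~(i) give $c(U)=2$, so the $U_n$ are topological annuli; together with $\Mod U_n \to +\infty$ and the finiteness of $\deg f|_{U_n}$ (a consequence of the proper-covering property in the tract), we land in the hypothesis of Theorem~\ref{thm:cutout}(ii)(b), which immediately yields trimodality and a transversal eventually isometric lamination $\mathcal{L}$. For case (b), where $c(U)=+\infty$, Theorem~\ref{thm:cutout} does not apply, and one must argue directly: no pair of non-coincident orbits can be eventually isometric because the infinitely many shrinking bottlenecks of $U_n$ surrounding the orbit force strict hyperbolic contraction; meanwhile, two points on distinct leaves of $\mathcal{C}$ correspond in each $U_n$ to points on different level curves of $u$, and the hyperbolic distance between two such curves inside the ambient round annulus converges to a positive limit determined by the difference of $u$-values, giving the semi-contracting behaviour. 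The principal obstacle, I expect, is the coherence statement in Step~(ii): producing a harmonic structure whose pullbacks yield a well-defined lamination on all of $U$, and controlling how (possibly infinitely many) critical points of $f$ along the orbit interact with the level sets; the hyperbolic estimates for case~(b) are similarly delicate, as they demand a quantitative understanding of the hyperbolic metric on domains of infinite connectivity, presumably via comparison with finite-connectivity subdomains.
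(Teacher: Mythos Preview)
Your overall architecture matches the paper's---build a contracting lamination from a harmonic function tied to the tract, then feed case~(a) into Theorem~\ref{thm:cutout} and argue directly for case~(b)---but there is a genuine gap in your choice of harmonic function. You take $u(z)=\log|f(z)|-\log R$ on $D$ and form $\mathcal{C}$ from pullbacks $u\circ f^n$. These are \emph{not} $f$-invariant: the identity $u\circ f^{n+1}=(u\circ f)\circ f^n$ does not say the laminations stabilise, only that the $(n{+}1)$-st pullback is the $n$-th pullback of a \emph{different} harmonic function on $U_n$. Since $u\circ f$ is not a function of $u$, level sets of $u$ on $U_{n+1}$ need not be images of level sets of $u$ on $U_n$, so ``same leaf of $\mathcal{C}$'' has no stable meaning. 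This also breaks your case-(b) lower bound: the ``difference of $u$-values'' between $f^n(z)$ and $f^n(w)$ varies with $n$ and is not obviously bounded away from zero.

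The paper repairs this by using the \emph{limiting} function $h(z)=\lim_{n\to\infty}\log|f^n(z)|/\log|f^n(z_0)|$ of Bergweiler--Rippon--Stallard, which is genuinely $f$-invariant (Equation~\eqref{eq:finv}); its level sets therefore give a single well-defined lamination. Contraction along a level curve is obtained not from annulus estimates but from a winding-number count: by the argument principle $f$ wraps the curve $Z_n-P_n$ times, Lemma~\ref{lem:ZP} gives $Z_n-P_n\to\infty$, and Schwarz--Pick then forces the single-traverse hyperbolic length to zero. The same lemma also drives part~(i): if the eventual connectivity were finite and $\ge 3$, Theorem~\ref{thm:cutout}(i) would make $f|_{U_n}$ univalent and hence $Z_n-P_n=1$, a contradiction---so the Kisaka--Shishikura combinatorics you invoke are replaced by this shorter route. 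For case~(b), $f$-invariance of $h$ lets one bound $d_{U_n}$ below by the Harnack metric and obtain the fixed positive quantity $|\log(h(z)/h(w))|$; and strict (never eventually isometric) contraction comes from the presence of infinitely many critical points along the orbit (\cite[Theorem~1.7]{BRS13}), not from ``shrinking bottlenecks''.
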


Now, an interesting question regarding Theorem \ref{thm:cutout} is whether the hypothesis that $U$ \textit{has} an eventual connectivity is necessary; every previously known example of a multiply connected wandering domain does. However, it turns out that this is not always the case: In Section \ref{sec:ex}, we will invoke Arakelyan's theorem to obtain a sequence of meromorphic functions approximating hand-picked functions, and will use it to construct the following example.

\begin{theorem} \label{thm:ex}
There exists a transcendental meromorphic function $g$ with a wandering domain $U$ such that
\begin{enumerate}[(i)]
    \item each $U_{4k}$, $k\geq 0$, is unbounded and simply connected,
    \item each $U_{4k+1}$, $k\geq 0$, is bounded and doubly connected,
    \item each $U_{4k+2}$, $k\geq 0$, is bounded and simply connected, and
    \item each $U_{4k+3}$, $k\geq 0$, is unbounded and simply connected.
\end{enumerate}
\end{theorem}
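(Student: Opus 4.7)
The plan is to construct $g$ as an Arakelyan-type approximation of a carefully designed, piecewise-defined model map. I would first pick pairwise disjoint closed sets $K_n\subset\Cx$, $n\geq 0$, each homeomorphic to the desired topological type of $U_n$: unbounded half-strips (or half-planes) for $K_{4k}$ and $K_{4k+3}$, a closed topological annulus around some centre $c_k$ for $K_{4k+1}$, and a closed topological disc for $K_{4k+2}$. The $K_n$'s are chosen so that $K_n\to\infty$ as $n\to\infty$ and so that $K:=\bigcup_n K_n$, possibly augmented by thin corridors linking the hole of each $K_{4k+1}$ to infinity, is an Arakelyan set in $\Cx$ (closed, with $\Chat\setminus K$ connected and locally connected at $\infty$).

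I would then design a continuous $\phi:K\to\Chat$ that is holomorphic in the interior of each $K_n$, has prescribed poles inside the bounded complementary component of each $K_{4k+1}$, and sends $K_n$ into $K_{n+1}$ in a controlled way. A natural choice: on $K_{4k}$, let $\phi$ be an exponential-type map wrapping the half-plane onto a small punctured disc centred at $c_k$, sitting comfortably inside $\intr K_{4k+1}$; on $K_{4k+1}$, let $\phi$ be a proper map into $\intr K_{4k+2}$; on $K_{4k+2}$, let $\phi$ be an affine map sending the disc deep into $K_{4k+3}$; and on $K_{4k+3}$, let $\phi$ send the half-plane into $K_{4(k+1)}$. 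The prescribed pole inside the hole of each $K_{4k+1}$ is what will ultimately force the Fatou component at step $4k+1$ to be non-simply connected.

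Applying Arakelyan's theorem in its meromorphic version with prescribed poles, I obtain a transcendental meromorphic $g:\Cx\to\Chat$ with $|g-\phi|<\epsilon_n$ on $K_n$ in the spherical metric, for any preassigned rapidly decaying sequence $\epsilon_n>0$. For a distinguished interior point $z_0\in K_0$ with forward orbit $z_n:=\phi^n(z_0)\in\intr K_n$, an inductive argument (Rouché together with Montel) shows that $g^n(z_0)$ stays close to $z_n$, that a fixed open disc around each $z_n$ lies in the Fatou set, and that the Fatou component $U_n\ni g^n(z_0)$ is contained in $\intr K_n$. Since the $K_n$'s are pairwise disjoint, $U:=U_0$ is automatically a wandering domain.

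The main obstacle is the joint control of the containment $U_n\subset\intr K_n$ and the exact connectivity of each $U_n$. The containment is achieved by placing enough poles of $\phi$ in the gaps between consecutive $K_n$'s so that these poles persist in $g$ and force $\partial K_n$ into the Julia set via a Montel-type argument. Once $U_n\subset\intr K_n$, the simple connectivity of $\intr K_{4k}$, $\intr K_{4k+2}$ and $\intr K_{4k+3}$ transfers directly to $U_{4k}$, $U_{4k+2}$ and $U_{4k+3}$, while the pole inside the hole of $K_{4k+1}$ guarantees that $U_{4k+1}$ has at least one hole. Ruling out extra holes in $U_{4k+1}$ (so it is exactly doubly, and not more highly, connected) is the finest point: it requires keeping extraneous poles and critical values of $g$ out of $\intr K_{4k+1}\setminus U_{4k+1}$, which can be arranged by a further refinement of the model and the approximation. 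I expect this delicate joint tuning of geometry and approximation error to absorb the bulk of the technical effort.
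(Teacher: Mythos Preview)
Your overall strategy---Arakelyan approximation of a piecewise model cycling through strip $\to$ annulus $\to$ disc $\to$ strip---is the paper's, but the separation mechanism is a genuine gap. Isolated poles in the gaps between the $K_n$ do not force $\partial K_n$ into the Julia set, and no Montel argument turns a discrete set of poles into a barrier: a Fatou component can simply flow around them. Worse, since you arrange $\phi(K_n)$ to sit comfortably inside $K_{n+1}$, a good approximation gives $g(K_n)\subset K_{n+1}$, so by Montel the \emph{closed} set $K_n$ already lies in the Fatou set and $U_n\supsetneq\intr K_n$; you will never get $U_n=\intr K_n$, so nothing about the topology of $U_n$ ``transfers directly'' from $K_n$ (a subdomain of a simply connected domain can perfectly well be multiply connected). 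The paper handles containment by a different device: it adds to the Arakelyan data a left half-plane $H_0$ on which the approximand is $0$ (creating an attracting basin $V$) together with vertical lines $L_n$ between consecutive approximation sets, also sent near $0$. Each $L_n$ then lies in the grand orbit of $V$, and these lines and their pullbacks (located via Rouch\'e) are what actually fence the $U_n$ inside slightly larger sets $E_n\supset G_n$.

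Even with the containment $G_n\subset U_n\subset E_n$, the paper still needs separate arguments for the exact connectivity. Simple connectivity of $U_{4k}$, $U_{4k+2}$, $U_{4k+3}$ is proved dynamically via the Rippon--Stallard lemma (any non-null-homotopic loop in a Fatou component of a meromorphic map must have an iterate surrounding a pole), together with an explicit check from the model and the error bounds that no iterate of such a loop can reach the relevant pole. For the exact double connectivity of $U_{4k+1}$ the paper does not chase stray poles or critical values as you propose; it simply applies Riemann--Hurwitz (Bolsch's lemma): once $U_{4k}$ is known to be simply connected, $c(U_{4k+1})\geq 3$ would force $c(U_{4k})=\infty$.
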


After such strange and wild behaviour from multiply connected wandering domains, it is encouraging to know that not everything about Benini \textit{et al.}'s classification scheme is overturned. As our final theorem (proved in Section \ref{sec:boundary}) shows, it remains (mostly) true that all orbits behave the same way regarding convergence to the boundary. However, we need to restrict our attention to a particular part of the boundary -- the ``outer'' boundary, defined for our purposes as the boundary of $\widetilde{U}$, the \textit{topological convex hull} of $U$, which is the union of $U$ and its bounded complementary components. Notice that $\partial\widetilde{U}$ is not necessarily connected or bounded; take for example the outer boundary of $\{z : |\Im z| < 1\}$.

\begin{theorem} \label{thm:classII}
Let $U$ be a wandering domain of a transcendental meromorphic function $f$. Then, exactly one of the following holds.
\begin{enumerate}[(a)]
    \item $\liminf_{n\to+\infty} \dist\left(f^n(z), \partial \widetilde{U}_n\right) > 0$ for all $z\in U$;
    \item there exists a subsequence $n_k\to+\infty$ for which $\dist\left(f^{n_k}(z), \partial \widetilde{U}_{n_k}\right)\to 0$ for all $z\in U$, and a different subsequence $m_k\to +\infty$ for which $\liminf_{k\to+\infty} \dist\left(f^{m_k}(z), \partial \widetilde{U}_{m_k}\right) > 0$ for all $z\in U$;
    \item $\dist\left(f^n(z), \partial\widetilde{U}_n\right)\to 0$ for all $z\in U$.
\end{enumerate}
Additionally, in cases (b) and (c), let $w_k\in\partial \widetilde{U}_{n_k}$ be such that $\dist\left(f^{n_k}(z), \partial\widetilde{U}_{n_k}\right) = |f^{n_k}(z) - w_k|$. Then, for every other $z'\in U$, we have $|f^{n_k}(z') - w_k| \to 0$ as $k\to +\infty$.
\end{theorem}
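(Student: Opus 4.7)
The proof adapts the strategy of Benini \textit{et al.} \cite{BEFRS19} by working systematically with the topological convex hulls $\widetilde{U}_n$ in place of the components $U_n$. Two structural facts make this possible. First, each $\widetilde{U}_n$ is simply connected when viewed as a subset of $\Chat$, since its complement there is precisely the unique component of $\Chat\setminus U_n$ containing $\infty$, which is connected. Second, although $f$ need not map $\widetilde{U}_n$ into $\widetilde{U}_{n+1}$ globally (a pole of $f$ in a bounded complementary component of $U_n$ would obstruct this), the inclusion $U_n \subset \widetilde{U}_n$ combined with iterated Schwarz--Pick applied to $f : U_n \to U_{n+1}$ yields
\[ d_{\widetilde{U}_n}\bigl(f^n(z), f^n(w)\bigr) \;\leq\; d_{U_n}\bigl(f^n(z), f^n(w)\bigr) \;\leq\; d_U(z, w) \]
for every $z, w \in U$ and every $n$. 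So orbits in $U$ remain at bounded hyperbolic distance in $\widetilde{U}_n$ throughout, without needing to control the dynamics off of $U_n$.

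With this in hand I would invoke the standard hyperbolic--Euclidean comparison for simply connected hyperbolic domains on $\Chat$: if $b$ lies in the hyperbolic $R$-ball about $a$ in $\Omega$, then $\dist(b, \partial\Omega) \asymp \dist(a, \partial\Omega)$ with constants depending only on $R$. Fixing a reference point $z_0 \in U$ and setting $R := d_U(z, z_0)$, the preceding bound places $f^n(z)$ in the hyperbolic $R$-ball of $f^n(z_0)$ in $\widetilde{U}_n$, giving
\[ \dist\bigl(f^n(z), \partial\widetilde{U}_n\bigr) \;\asymp\; \dist\bigl(f^n(z_0), \partial\widetilde{U}_n\bigr) \]
uniformly in $n$. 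Consequently the asymptotic behavior of $\dist(f^n(\cdot), \partial\widetilde{U}_n)$ is independent of $z \in U$, and the trichotomy (a)--(c) corresponds to the three mutually exclusive possibilities for the sequence $\delta_n := \dist(f^n(z_0), \partial\widetilde{U}_n)$: bounded below, mixed behavior with $\liminf\delta_n = 0 < \limsup \delta_n$, or $\delta_n \to 0$. The uniformity guarantees that the same subsequences $(n_k)$ and $(m_k)$ serve every $z \in U$ in case (b).

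For the last statement of the theorem, the same comparison bounds the Euclidean diameter of a hyperbolic $R$-ball by a constant multiple of $\dist(a, \partial\Omega)$, giving $|f^{n_k}(z) - f^{n_k}(z')| \leq C_R \, |f^{n_k}(z) - w_k|$ with $R = d_U(z, z')$; since $|f^{n_k}(z) - w_k| \to 0$ in cases (b) and (c), the triangle inequality yields $|f^{n_k}(z') - w_k| \to 0$. I expect the main obstacle to be the careful treatment of the hyperbolic--Euclidean comparison when $\widetilde{U}_n$ is unbounded on $\Chat$: the classical estimate lives naturally in the spherical metric, and translating back to the Euclidean $\dist$ of the theorem statement requires confirming that only iterates whose nearest boundary point lies in the finite plane are relevant to the convergence conclusions, which should follow from the fact that $f^n(z)$ stays in the finite plane whenever it stays within bounded Euclidean distance of $\partial\widetilde{U}_n$.
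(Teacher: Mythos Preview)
Your proposal is correct and follows essentially the same route as the paper: reduce to the simply connected hulls $\widetilde{U}_n$, use the Schwarz--Pick chain $d_{\widetilde{U}_n}\!\leq d_{U_n}\!\leq d_U$ to keep orbits at bounded hyperbolic distance, and then invoke the hyperbolic--Euclidean comparison in simply connected domains (which the paper obtains via the Harnack-type density estimate of Lemma~\ref{lem:harnack} together with the Koebe bound $\rho_\Omega(z)\geq 1/(2\dist(z,\partial\Omega))$) to transfer boundary-distance behaviour from one orbit to all others. Your closing worry about the spherical metric is unnecessary: since $J(f)$ is unbounded for transcendental $f$, each $\widetilde{U}_n$ is a proper simply connected subdomain of $\Cx$, so the planar density estimates apply directly and no passage through the sphere is needed.
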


The final assertion saying that all orbits converge to ``the same parts'' of the boundary (when they do so at all) is new even for simply connected wandering domains, but still uses the original techniques of \cite{BEFRS19}.

\textsc{Acknowledgements.} I would like to thank my supervisors, Phil Rippon and Gwyneth Stallard, for their encouragement, comments, and suggestions about this work.

\section{The hyperbolic metric and multiply connected wandering domains} \label{sec:hyp}
This section is devoted to the proof of Theorem \ref{thm:cutout}. It is divided in two parts: first, we study a ``toy'' model of composing power maps between annuli; then, we justify the attention given to such a simple model by showing that it is in many ways equivalent to wandering domains of eventual connectivity two (the proof of Theorem \ref{thm:cutout}(i) is simple, and we do not dwell extensively on it).

Before that, however, we must clarify what we mean by a lamination, since this concept is central to transferring our annulus-based knowledge to a general setting. We would like to point out that different texts use slightly different definitions; ours is in the spirit of \cite{KH95} and \cite{Lee13}, and is tailored to the kinds of manifolds we will meet here.

\begin{definition}
Let $X$ be a Riemann surface, and let $Y\subseteq X$ be a subset such that $X\setminus Y$ is at most countable. A \textit{lamination} of $X$ is a partition $\{L_\alpha\}_{\alpha\in A}$ of $Y$ into injectively immersed real submanifolds of (real) dimension one such that:
\begin{itemize}
    \item $L_\alpha\cap L_\beta = \emptyset$ whenever $\alpha\neq\beta$;
    \item For every $p\in Y$, there exists a neighbourhood $U$ of $p$ and a conformal isomorphism $h:U\to h(U)\subset\Cx$ such that $h(L_\alpha\cap U)$ is either empty or of the form $\{z\in h(U) : \Im z = k\}$ for some constant $k = k(\alpha)$ (i.e., $L_\alpha$ is ``straightened'' onto a line segment, and different $\alpha$'s lead to different parallel line segments).
\end{itemize}
The submanifolds $\{L_\alpha\}_{\alpha\in A}$ are called the \textit{leaves} of the lamination. If $Y = X$, the lamination is called a \textit{foliation} of $X$.
\end{definition}

\subsection{The annulus model} \label{ssec:model}
Now, let us consider the composition of power mappings between annuli; first, we must understand our domain. For any $R > 1$, we define
\[ A(R) := \{z\in\Cx : 1/R < |z| < R\}; \]
the modulus of $A(R)$ is given by\footnote{We're using Beardon and Minda's definition \cite{BM06}; other authors normalise it by a factor of $2\pi$.}
\[ \Mod A(R) := \log \frac{R}{R^{-1}} = 2\log R. \]
It is well known that every doubly connected domain on $\Chat$ is conformally isomorphic to either $\Cx^*$, $\mathbb{D}^*$, or $A(R)$ for some $R > 1$, that these model spaces are all incompatible with each other, and that $\Mod A(R)$ is a conformal invariant defining equivalence classes of doubly connected domains with non-degenerate complementary components (see, for instance, \cite[Section 6.5]{Ahl79}). With that in mind, let us take a closer look at $A(R)$; particularly important subsets are the circles
\[ C_r := \{z\in A(R) : |z| = r\}, r\in (1/R, R), \] and the ray segments
\[ \text{$L_\theta := \{z\in A(R) : \arg z = \theta\}$ for $\theta\in [0, 2\pi)$.} \]
In the following two lemmas, we gather some facts about the hyperbolic metric in $A(R)$; these facts are either ``clear'' from an explicit universal covering of the annulus, or can be found in \cite{BM06}, \cite{Com11}, or \cite[Chapter 1]{Bus10}.

\begin{lemma} \label{lem:geods}
For any $R > 1$,
\begin{enumerate}[(i)]
    \item For any $\theta\in[0, 2\pi)$, $L_\theta$ is a hyperbolic geodesic of $A(R)$. Geodesics arcs in $L_\theta$ are distance-minimising for any two points on the same $L_\theta$, and are also unique in their homotopy class.
    \item The parametrisation $\gamma(t) = \exp(2\pi int)$, $t\in[0, 1]$, of $C_1$ traversed $n\in\mathbb{Z}\setminus\{0\}$ times is the only closed geodesic in its free homotopy class in $A(R)$. Its length is given by
    \[ \ell_{A(R)}(\gamma) = \frac{2\pi^2|n|}{\Mod A(R)}. \]
\end{enumerate}
\end{lemma}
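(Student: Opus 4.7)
The plan is to lift the problem to the universal cover of $A(R)$, given by the map $w\mapsto\exp(iw)$ from the horizontal strip $S = \{w\in\Cx : |\Im w| < \log R\}$ down onto $A(R)$, whose deck group is generated by $w\mapsto w+2\pi$. Under this covering, each $L_\theta$ lifts to the family of vertical lines $\{\Re w = \theta + 2\pi k : k\in\mathbb{Z}\}$, and each circle $C_r$ lifts to the horizontal line $\Im w = -\log r$. Both parts of the lemma will follow by computing the hyperbolic metric on $S$ and exploiting its symmetries.

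To compute that metric, I would start from the classical density $1/\sin(\Im z)$ on the strip $\{0 < \Im z < \pi\}$ (derived from the upper half-plane via $z\mapsto e^z$) and transport it via the affine isomorphism $z\mapsto (2\log R/\pi)(z - i\pi/2)$ onto $S$ to obtain
\[
\rho_S(w) \;=\; \frac{\pi/(2\log R)}{\cos(\pi \Im w/(2\log R))}.
\]
The two features I rely on repeatedly are that $\rho_S$ depends only on $\Im w$ and that it attains its strict global minimum $\pi/(2\log R)$ on the real axis. The density on $A(R)$ is recovered from the covering map by the usual pullback formula.

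For part (i), fix $c\in\mathbb{R}$ and two points $w_0 = c + iv_0$, $w_1 = c + iv_1$ on the vertical line $\ell = \{\Re w = c\}$. For any path $w(t) = u(t) + iv(t)$ in $S$ joining them, since $\rho_S$ depends only on $\Im w$,
\[
\int \rho_S(w(t))\sqrt{u'(t)^2 + v'(t)^2}\, dt \;\geq\; \int \rho_S(iv(t))\,|v'(t)|\, dt \;\geq\; \left|\int_{v_0}^{v_1} \rho_S(iv)\, dv\right|,
\]
with equality precisely when $u\equiv c$ and $v$ is monotone; so the vertical segment is the unique length-minimiser in $S$ between $w_0$ and $w_1$. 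Projecting down, and using that homotopic paths in $A(R)$ lift to paths with the same endpoints in $S$, we deduce that an arc of $L_\theta$ is the unique length-minimiser in its relative homotopy class. To upgrade this to the overall distance-minimising claim (and so conclude $L_\theta$ is a geodesic in $A(R)$), I would invoke the explicit hyperbolic distance formula on $S\cong\mathbb{H}$ to verify that $d_S(w_0, w_1 + 2\pi k)$ is strictly increasing in $|k|$, which ultimately reduces to $\cosh$ being minimised at $0$.

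For part (ii), a closed curve in $A(R)$ freely homotopic to $C_1$ traversed $n$ times lifts to a path in $S$ from some $w_0$ to $w_0 + 2\pi n$, and its hyperbolic length satisfies
\[
\int \rho_S(w(t))|w'(t)|\,dt \;\geq\; \frac{\pi}{2\log R}\int |w'(t)|\,dt \;\geq\; \frac{\pi}{2\log R}\cdot 2\pi|n| \;=\; \frac{\pi^2|n|}{\log R},
\]
with equality in the first inequality only when $\Im w\equiv 0$ and in the second only when the path is a Euclidean straight segment. Rewriting $\log R = \tfrac12\Mod A(R)$ yields the length formula in the lemma, and the unique simultaneous minimiser projects to $C_1$ traversed $|n|$ times with the stated parametrisation. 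The main obstacle throughout is pinning down uniqueness of the minimisers; this rests on the strict inequality $\rho_S(iv) > \rho_S(0)$ for $v\neq 0$, a consequence of the strict concavity of $\cos$ on $(-\pi/2, \pi/2)$.
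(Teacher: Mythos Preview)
Your proof is correct and follows precisely the approach the paper itself points to: the lemma is stated without proof, with the remark that these facts are ``clear from an explicit universal covering of the annulus'' or can be found in the cited references, and you have carried out exactly that universal-cover computation. Your density formula, the variational argument for the radial geodesics, and the length minimisation on the real axis of the strip all check out, including the normalisation against the stated length $2\pi^2|n|/\Mod A(R)$.
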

\begin{lemma} \label{lem:coverings}
Let $R, S > 1$, let $f:A(R)\to A(S)$ be a holomorphic mapping, and define $\gamma(t) = \exp(2\pi it)$  for $t\in[0, 1]$. If $n = I(f\circ\gamma, 0)$, then
\[ |n| \leq \frac{\Mod A(S)}{\Mod A(R)}, \]
with equality if and only if $S = R^{|n|}$ and $f(z) = e^{i\theta}z^n$ for some $\theta\in\mathbb{R}$.
\end{lemma}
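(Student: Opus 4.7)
The plan is to combine the Schwarz--Pick contraction principle with the closed-geodesic length formula of Lemma \ref{lem:geods}(ii). The curve $\gamma$ is $C_1$ traversed once in $A(R)$, so its hyperbolic length is $2\pi^2/\Mod A(R)$. Its image $f\circ\gamma$ is a closed curve in $A(S)$ with $I(f\circ\gamma,0)=n$, hence freely homotopic to $C_1\subset A(S)$ traversed $n$ times; by Lemma \ref{lem:geods}(ii), the closed geodesic in that homotopy class has length $2\pi^2|n|/\Mod A(S)$. Since closed geodesics minimise length in their free homotopy class, and holomorphic maps between hyperbolic Riemann surfaces are non-expanding,
\[ \frac{2\pi^2|n|}{\Mod A(S)} \le \ell_{A(S)}(f\circ\gamma) \le \ell_{A(R)}(\gamma) = \frac{2\pi^2}{\Mod A(R)}, \]
which immediately yields $|n|\le \Mod A(S)/\Mod A(R)$.

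For the equality case, suppose $|n| = \Mod A(S)/\Mod A(R)$. Then both inequalities above are tight, which in particular forces $f$ to be a local hyperbolic isometry at every point of $\gamma$. By the strict form of the Schwarz--Pick lemma for hyperbolic Riemann surfaces (equality of the hyperbolic contraction factor at a single point implies equality everywhere, with $f$ being a local isometry), $f:A(R)\to A(S)$ is a holomorphic covering map of degree $|n|$.

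To classify such coverings, I would lift $f$ to the universal covers via the strip models $\pi_R(z) = e^{iz}:\{|\Im z| < \log R\} \to A(R)$ and similarly $\pi_S$. The lift $\tilde{f}$ is a biholomorphism of strips satisfying $\tilde{f}(z+2\pi) = \tilde{f}(z) + 2\pi n$, so it conjugates the deck translation $z \mapsto z+2\pi$ to $z \mapsto z+2\pi n$. These are hyperbolic isometries of the respective strips, and matching their translation lengths (namely $\pi^2/\log R$ and $|n|\pi^2/\log S$, computed from Lemma \ref{lem:geods}(ii)) forces $S = R^{|n|}$. The centraliser of $z\mapsto z+2\pi n$ in the automorphism group of the strip consists of real translations, so $\tilde{f}(z) = nz + \theta$ for some $\theta\in\mathbb{R}$, which descends to $f(z) = e^{i\theta}z^n$.

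The main obstacle is the step in the equality case from pointwise equality of Schwarz--Pick along $\gamma$ to $f$ being a covering map on all of $A(R)$; this is handled by the strong-form Schwarz--Pick principle together with the fact that a holomorphic local isometry between complete hyperbolic Riemann surfaces is automatically a covering. Once this is granted, the classification of coverings between annuli via the strip model is routine.
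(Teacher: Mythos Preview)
The paper does not actually prove Lemma~\ref{lem:coverings}: it is stated as one of two lemmas ``gather[ing] some facts about the hyperbolic metric in $A(R)$'' that are ``either `clear' from an explicit universal covering of the annulus, or can be found in \cite{BM06}, \cite{Com11}, or \cite[Chapter 1]{Bus10}.'' There is therefore nothing to compare your argument against.

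Your proof is correct. The inequality is exactly the standard argument: $\gamma$ is the core geodesic of $A(R)$, Schwarz--Pick gives $\ell_{A(S)}(f\circ\gamma)\le\ell_{A(R)}(\gamma)$, and the closed-geodesic length formula of Lemma~\ref{lem:geods}(ii) bounds $\ell_{A(S)}(f\circ\gamma)$ from below in the free homotopy class determined by the winding number $n$. For the equality case, your invocation of the rigidity form of Schwarz--Pick (equality of the infinitesimal contraction at one point forces $f$ to be a covering) is the right move, and the subsequent classification via the strip model is sound. One could shorten that last step slightly: once $f$ is a degree-$|n|$ covering, the $|n|$-fold cover of $A(S)$ is $A(S^{1/|n|})$ via $w\mapsto w^{|n|}$, so conformal invariance of the modulus gives $\Mod A(R)=\Mod A(S)/|n|$ directly, and the covering transformation group then pins $f$ down to $e^{i\theta}z^{\pm|n|}$. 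But your route through translation lengths in the strip arrives at the same conclusion.
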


With these in place, let us begin. We fix some $R > 1$, take a sequence $(d_n)_{n\in\mathbb{N}}$ of natural numbers, and define $D_0 = 1$ and, for $n\geq 1$,
\[ D_n := \prod_{k=1}^n d_n. \]
We assume for the sake of convenience that $D_n\to +\infty$ as $n\to +\infty$; otherwise, we would have $d_n = 1$ for all large $n$, which does not lead to interesting behaviour. Now, take the sequence $f_n:A(R^{D_{n-1}})\to A(R^{D_n})$, $n\geq 1$, given by $f_n(z) = z^{d_n}$; it is clear that we can compose these maps, obtaining
\[ F_n = f_n\circ f_{n-1}\circ\cdots\circ f_1 : A(R)\to A(R^{D_n}). \]
Without further ado, let us describe the long-term behaviour of the hyperbolic metric relative to $F_n$.

\begin{theorem} \label{thm:ann}
For any choice of $R > 1$ and sequence $(d_n)_{n\in\mathbb{N}}$ as above, any pair $z, w\in A(R)$ satisfies exactly one of the following:
\begin{enumerate}[(i)]
    \item $z$ and $w$ belong to the same $C_r$ for some $r\in A(1/R, R)$, and $d_{A(R^{D_n})}\left(F_n(z), F_n(w)\right)\to 0$ as $n\to+\infty$;
    \item $z$ and $w$ belong to the same $L_\theta$ for some $\theta\in[0, 2\pi)$, and $d_{A(R^{D_n})}\left(F_n(z), F_n(w)\right) = d_{A(R)}(z, w)$ for all $n\in\mathbb{N}$;
    \item $z$ and $w$ are neither on the same circle nor on the same ray, and $d_{A(R^{D_n})}\left(F_n(z), F_n(w)\right)\to c(z, w) > 0$ without ever being equal to $c(z, w)$.
\end{enumerate}
\end{theorem}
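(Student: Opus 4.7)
The plan is to pass to the universal cover. For each $n$, set $\Sigma_n := \{w\in\Cx : |\Im w| < D_n \log R\}$ with covering $\pi_n(w) = e^{iw}$ and deck group generated by $w\mapsto w+2\pi$. The crucial observation is that $F_n$ lifts to the linear map $\tilde F_n\colon \Sigma_0 \to \Sigma_n$, $\tilde F_n(w) = D_n w$, which is a conformal isomorphism between strips and therefore a hyperbolic isometry. Combined with the standard distance-via-lifts formula, for any lifts $\tilde z,\tilde w$ of $z,w\in A(R)$ this gives
\[
d_{A(R^{D_n})}\bigl(F_n(z), F_n(w)\bigr) \;=\; \min_{k\in\mathbb{Z}} d_{\Sigma_0}\bigl(\tilde z,\, \tilde w + 2\pi k/D_n\bigr).
\]
Under $\pi_0$, the circle $C_r$ lifts to the horizontal line $\Im w = -\log r$ and the ray $L_\theta$ to the vertical line $\Re w = \theta$, which by Lemma~\ref{lem:geods}(i) is a geodesic of $\Sigma_0$.

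For case (i), I would use the explicit form of the hyperbolic metric on $\Sigma_n$ to compute the length of $C_{r^{D_n}}$ in $A(R^{D_n})$:
\[
\ell_{A(R^{D_n})}(C_{r^{D_n}}) \;=\; \frac{\pi^2}{D_n\log R \cdot \cos\bigl(\pi\log r/(2\log R)\bigr)} \;\xrightarrow[n\to\infty]{}\; 0.
\]
Since the hyperbolic diameter of a closed curve is at most half its length, the claim follows. For case (ii), the lifts $\tilde z,\tilde w$ share the same real part, so they lie on a common vertical line, which is a geodesic of $\Sigma_0$. The minimum over $k$ in the displayed formula is therefore attained at $k=0$, yielding $d_{A(R^{D_n})}(F_n(z),F_n(w)) = d_{\Sigma_0}(\tilde z,\tilde w) = d_{A(R)}(z,w)$.

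For case (iii), the idea is a triangle-inequality squeeze through the intermediate point $\zeta := |z|\,e^{i\arg w}$, which lies on the same circle as $z$ (handled by case (i)) and on the same ray as $w$ (handled by case (ii)). Combined with the fact that each $f_n$ is a covering map and hence distance-non-increasing, this pins down the limit as $c(z,w) := d_{A(R)}(|z|,|w|) > 0$ and yields monotone convergence. The main obstacle is then the strict-inequality clause. Via the displayed formula, it amounts to showing that the continuous infimum $\inf_{t\in\mathbb{R}} d_{\Sigma_0}(\tilde z, \tilde w + t)$, attained at $t^* = \arg z - \arg w$, is never realised by a ``lattice'' value $2\pi k/D_n$. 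This is immediate when $(\arg z-\arg w)/(2\pi)$ is irrational; the resonant cases, where $F_n(z)$ and $F_n(w)$ could end up on a common ray of $A(R^{D_n})$, are the most delicate part and would require separate care in a full write-up.
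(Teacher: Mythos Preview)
Your argument is essentially the paper's, repackaged through the universal cover. The paper works directly with the fact that each $F_n$ is a holomorphic covering and hence a local hyperbolic isometry: $F_n$ preserves lengths of curves, sends $C_r$ onto $C_{r^{D_n}}$ traversed $D_n$ times (so $\ell_{A(R^{D_n})}(C_{r^{D_n}})=\ell_{A(R)}(C_r)/D_n\to 0$, which is your (i) without the explicit density), and sends $L_\theta$ bijectively onto the geodesic $L_{D_n\theta}$ (your (ii)). For (iii) it uses exactly your intermediate point $\zeta=|z|e^{i\arg w}$ and the reverse triangle inequality. Your lift formula $d_{A(R^{D_n})}(F_n(z),F_n(w))=\min_k d_{\Sigma_0}(\tilde z,\tilde w+2\pi k/D_n)$ is a tidy way to encode all of this at once, and it buys you the explicit identification $c(z,w)=d_{A(R)}(|z|,|w|)$, which the paper does not record.

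Your caution about the ``never equal'' clause is well placed: the paper's own proof does not address it either, stopping at ``non-increasing and bounded below by a positive number''. In fact your resonant case is not merely delicate but a genuine obstruction to the clause as literally stated. If $(\arg z-\arg w)/2\pi=p/q$ in lowest terms and $q\mid D_N$ for some $N$, then $F_N(z)$ and $F_N(w)$ lie on a common ray of $A(R^{D_N})$, and by (ii) applied from stage $N$ onward the sequence is constant for $n\ge N$; the limit is then attained. So the phrase ``without ever being equal to $c(z,w)$'' should be read as a statement about the generic (irrational-angle) pair rather than every pair in case (iii). Your write-up is already as complete as the paper's on this point, and more honest about where the difficulty lies.
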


It is easy to see that the collections $\mathcal{C} := \{C_r : 1/R < r < R\}$ and $\mathcal{L} := \{L_\theta : 0\leq \theta < 2\pi\}$ form two transversal foliations of $A(R)$; $\mathcal{C}$ is contracting, $\mathcal{L}$ is isometric, and together they encode all relevant information regarding the composition of the power maps $f_n$ (see Figure \ref{fig:laminations}). Now, let us prove Theorem \ref{thm:ann}.

\begin{proof}[Proof of Theorem \ref{thm:ann}]
The first part of our claim is clear; any pair $z, w\in A(R)$ is either on the same circle, on the same ray, or neither. We are left to associate each case to its dynamical consequences, which we'll do on a case-by-case basis.

First, assume that $z$ and $w$ are on the same circle $C_r\subset A(R)$, which we parametrise as $\gamma(t) = r\exp(2\pi it)$, $t\in[0, 1]$. Since $F_n:A(R)\to A(R^{D_n})$ is locally isometric, we have $\ell_{A(R)}(\gamma) = \ell_{A(R^{D_n})}(F_n\circ\gamma)$, where (here and throughout) $\ell_\Omega$ denotes the hyperbolic length of a curve; we also know that $F_n(z) = z^{D_n}$, so that $F_n\circ\gamma(t) = r^{D_n}\exp(2\pi iD_nt)$. Thus, $F_n\circ \gamma$ is a parametrisation of the circle $C_{r^{D_n}}\subset A(R^{D_n})$, traversed $D_n$ times with constant speed. It follows that the length $\ell_{A(R^{D_n})}(C_{r^{D_n}})$ of $C_{r^{D_n}}$ traversed only once is $\ell_{A(R)}(C_r)/D_n$; connecting $F_n(z)$ to $F_n(w)$ with a simple arc on $C_{r^{D_n}}$, we see that
\[ d_{A(R^{D_n})}\left(F_n(z), F_n(w)\right)\leq \ell_{A(R^{D_n})}(C_{r^{D_n}}) = \frac{\ell_{A(R)}(C_r)}{D_n}, \]
and it is clear that the right-hand side goes to zero as $n\to+\infty$.

Now, assume that $|z| \neq |w|$, but $z$ and $w$ are both on the same ray $L_\theta\subset A(R)$. We take a distance-minimising geodesic $\gamma\subset A(R)$ connecting $z$ to $w$, and we know by Lemma \ref{lem:geods}(i) that $\gamma\subset L_\theta$. Again by the fact that $F_n$ is locally isometric, we have $\ell_{A(R^{D_n})}(F_n\circ\gamma) = \ell_{A(R)}(\gamma) = d_{A(R)}(z, w)$. Unlike in the previous case, however, we know that $F_n$ actually takes $L_\theta$ to $L_{D_n\theta}$ in one-to-one fashion, meaning that $F_n\circ\gamma$ is a distance-minimising geodesic arc in $L_{D_n\theta}$ and $\ell_{A(R^{D_n})}(F_n\circ\gamma) = d_{A(R^{D_n})}\left(F_n(z), F_n(w)\right)$.

Finally, assume that $z$ and $w$ are neither on the same circle nor on the same ray. There exists, however, some $z^*\in A(R)$ such that $|z^*| = |z|$ and $\arg z^* = \arg w$, meaning that $z^*$ belongs to the same circle as $z$ and to the same ray as $w$. By the reverse triangle inequality, we have
\[ d_{A(R^{D_n})}\left(F_n(z), F_n(w)\right) \geq \left|d_{A(R^{D_n})}\left(F_n(z), F_n(z^*)\right) - d_{A(R^{D_n})}\left(F_n(z^*), F_n(w)\right)\right|; \]
items (i) and (ii) of this theorem now tell us that $d_{A(R^{D_n})}\left(F_n(z), F_n(z^*)\right)\to 0$ as $n\to +\infty$, while $d_{A(R^{D_n})}\left(F_n(z^*), F_n(w)\right) = d_{A(R)}(z^*, w)$ for all $n$. This yields a positive lower bound for $c(z, w) = \lim_{n\to+\infty} d_{A(R^{D_n})}\left(F_n(z), F_n(w)\right)$, which exists since $d_{A(R^{D_n})}\left(F_n(z), F_n(w)\right)$ is a decreasing sequence bounded from below.
\end{proof}

\begin{figure}[!h]
    \centering
    \includegraphics[width=0.6\columnwidth]{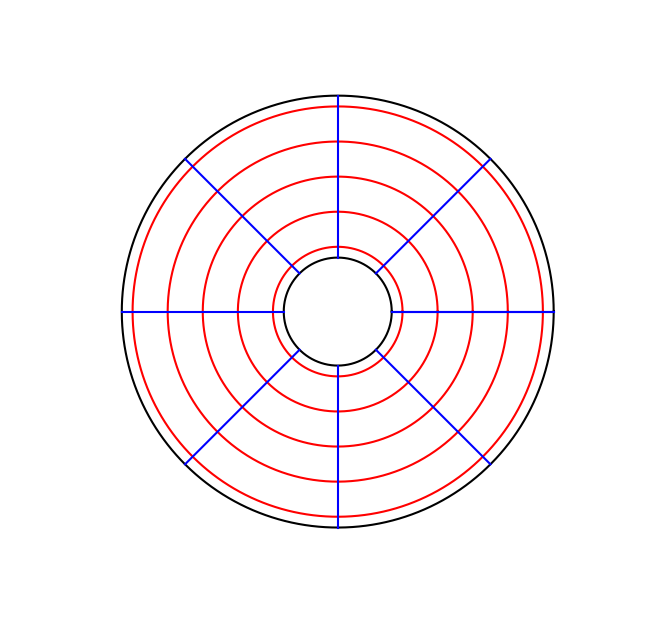}
    \caption{An annulus $A(R)$ with its contracting foliation $\mathcal{C}$ (in red) and isometric foliation $\mathcal{L}$ (in blue).}
    \label{fig:laminations}
\end{figure}

\subsection{Finite eventual connectivity}
Now, we consider a transcendental meromorphic function $f:\Cx\to\Chat$ with a wandering domain $U$, which we will assume has finite eventual connectivity $k$. First, in order to deal with case (ii) of Theorem \ref{thm:cutout}, we want to show that, in the case $k = 2$, we can ``conjugate'' the dynamics of $f|_{U_n}$ to the model discussed in Subsection \ref{ssec:model}. For that, we will need the following amalgamation of Theorems 1 and 3 from \cite{Bol99}, proved using Ahlfors's theory of covering surfaces.

\begin{lemma} \label{lem:bolsch}
Let $f$ be a transcendental meromorphic function. If $V\subset \Chat$ is an arbitrary domain and $U$ is any component of $f^{-1}(V)$, then exactly one of the following holds.
\begin{enumerate}[(i)]
    \item There exists $n\in\mathbb{N}$ such that $c(U) - 2 = n\cdot\left(c(V) - 2\right) + \delta_f(U)$ and $n = \deg(f|_U)$;
    \item $f(U)$ covers every value of $V$ infinitely often with at most two exceptions, and $c(V)\geq 3$ implies $c(U) = +\infty$.
\end{enumerate}
\end{lemma}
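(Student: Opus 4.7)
The approach is a dichotomy on whether $f\vert_U : U \to V$ is a proper map between Riemann surfaces. Properness here is equivalent to $f\vert_U$ being a finite-sheeted branched covering; it fails precisely when some sequence $z_k \in U$ tending to the ideal boundary of $U$ in $\Chat$ has $f(z_k)$ remaining in a compact subset of $V$. The two bullets of the lemma correspond respectively to the proper and non-proper cases.

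In the proper case, I would set $n = \deg(f\vert_U) < +\infty$ and invoke the topological Riemann--Hurwitz formula. Because $f\vert_U$ is a branched cover of finite degree onto $V$, both surfaces have finite Euler characteristic, and one has $\chi(U) = n\,\chi(V) - \delta_f(U)$, where $\delta_f(U)$ is the total branching order over $U$. Substituting $\chi(D) = 2 - c(D)$ for any planar domain $D$ of finite connectivity and rearranging gives exactly (i). The only subtlety is justifying that properness of an open holomorphic map between planar Riemann surfaces does produce a well-defined, finite, constant degree together with finite total branching — a routine exercise with compact exhaustions.

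In the non-proper case, the key input is Ahlfors's theory of covering surfaces. The failure of properness means that $f\vert_U$ displays transcendental behaviour near the ideal boundary of $U$, and Ahlfors's second main theorem in its form for non-compactly bordered surfaces yields a Picard-type bound, namely that at most two values of $V$ are assumed only finitely often by $f\vert_U$. This handles the first half of (ii). For the connectivity statement, if $c(V) \geq 3$ I would pick three distinct complementary components $K_1, K_2, K_3$ of $\Chat \setminus V$; since $f\vert_U$ has infinite degree and omits at most two values infinitely often, the preimages of at least one of the $K_i$ produce infinitely many disjoint bounded complementary components of $U$, so $c(U) = +\infty$.

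The main obstacle is the non-proper branch, and specifically the sharp ``at most two exceptional values'' conclusion. Obtaining this requires the full Ahlfors length--area machinery — mean covering number, relative perimeter, and island counts — together with a careful analysis of how ramification concentrates towards the ends of $U$ where $f$ escapes from $V$. This is precisely the technical core of Bolsch's argument and the reason the lemma is quoted rather than reproved; a self-contained treatment would essentially amount to redoing the covering-surface defect relation adapted to the planar setting.
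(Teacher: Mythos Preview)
Your proposal is essentially correct and aligns with the paper, which does not prove the lemma at all: it is stated as ``an amalgamation of Theorems 1 and 3 from \cite{Bol99}, proved using Ahlfors's theory of covering surfaces'' and then used as a black box. Your outline (proper/non-proper dichotomy, Riemann--Hurwitz in the proper case, Ahlfors covering-surface theory for the Picard-type statement in the non-proper case) is exactly the route Bolsch takes, and you explicitly acknowledge as much in your final paragraph. One minor quibble: your argument for $c(V)\geq 3 \Rightarrow c(U)=+\infty$ conflates exceptional \emph{values} with complementary \emph{components}; the actual argument uses Ahlfors's island theorem applied to three Jordan domains, one around each of three chosen complementary components of $V$, to produce infinitely many simple islands in $U$, each of which surrounds a distinct complementary component of $U$.
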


With this in mind, let us begin.

\begin{lemma} \label{lem:conj}
Let $f:\Cx\to\Chat$ be a meromorphic function with a wandering domain $U$. Assume that $c(U_n) = 2$ and $\deg f|_{U_{n-1}} = d_n < +\infty$ for all $n\geq 0$. Then, there exists $R > 1$ and a sequence of conformal isomorphisms $\varphi_n:U_n\to A(R^{D_n})$ such that, for $n\geq 1$,
\[ \text{$\varphi_n\circ f(z) = \left(\varphi_{n-1}(z)\right)^{d_n}$ for $z\in U_{n-1}$.} \]
\end{lemma}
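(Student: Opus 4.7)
The plan is to conformally model each $U_n$ as a standard annulus, show that $f|_{U_{n-1}}$ is an unbranched cover between these annuli, deduce the modulus relation $R_n = R_{n-1}^{d_n}$ via closed geodesics, and finally normalise the resulting power maps to eliminate extra rotations. First I would argue that each $U_n$ is conformally equivalent to some standard annulus $A(R_n)$. Every doubly connected planar domain is conformally isomorphic to $\Cx^*$, $\mathbb{D}^*$, or some $A(R)$; in the first two cases at least one complementary component of $U_n$ in $\Chat$ would be a single point, hence an isolated point of $J(f)$, contradicting the fact that $J(f)$ is perfect for transcendental meromorphic functions with non-empty Fatou set. Fix any conformal isomorphism $\psi_n : U_n \to A(R_n)$. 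Lemma \ref{lem:bolsch}(i) applied to $f|_{U_{n-1}} : U_{n-1} \to U_n$ then forces $\delta_f(U_{n-1}) = 0$ (since both connectivities are $2$ and the degree is finite), so $f|_{U_{n-1}}$ is an unbranched $d_n$-fold cover, hence a local hyperbolic isometry.

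To pin down $R_n$ in terms of $R_{n-1}$ and $d_n$, I would compare closed geodesics. If $\gamma_{n-1}$ is the simple closed hyperbolic geodesic of $U_{n-1}$, then $f \circ \gamma_{n-1}$ is freely homotopic to the closed geodesic of $U_n$ traversed $d_n$ times (because $f_*$ acts as multiplication by $\pm d_n$ on $\pi_1$), and has the same hyperbolic length as $\gamma_{n-1}$ by the local isometry property. Transporting this via $\psi_{n-1}$ and $\psi_n$ and applying Lemma \ref{lem:geods}(ii) yields $\Mod U_n = d_n \cdot \Mod U_{n-1}$, i.e., $R_n = R_{n-1}^{d_n}$. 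Setting $R := R_0$, induction gives $R_n = R^{D_n}$. Then, applying Lemma \ref{lem:coverings} to $g_n := \psi_n \circ f \circ \psi_{n-1}^{-1}$, the equality case is forced (the winding number $\pm d_n$ saturates the inequality), so $g_n(z) = e^{i\theta_n} z^{\varepsilon_n d_n}$ for some $\theta_n \in \mathbb{R}$ and $\varepsilon_n \in \{\pm 1\}$.

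To produce the desired $\varphi_n$ I would inductively absorb these extra parameters into self-isomorphisms of the target annuli. Set $\varphi_0 := \psi_0$, and given $\varphi_{n-1} = \sigma_{n-1} \circ \psi_{n-1}$ with $\sigma_{n-1}$ a self-isomorphism of $A(R^{D_{n-1}})$, choose a self-isomorphism $\sigma_n$ of $A(R^{D_n})$ --- either a rotation $w \mapsto e^{i\alpha} w$ or a rotation-inversion $w \mapsto e^{i\alpha}/w$ --- such that $\sigma_n \circ g_n \circ \sigma_{n-1}^{-1}(w) = w^{d_n}$. Such a $\sigma_n$ exists because the self-isomorphism group of a round annulus acts transitively on the set of holomorphic maps between standard annuli of the form \emph{rotation composed with $d_n$-th power}. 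Defining $\varphi_n := \sigma_n \circ \psi_n$ then yields $\varphi_n(f(z)) = \varphi_{n-1}(z)^{d_n}$ for $z \in U_{n-1}$, as required.

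The main obstacle is the modulus computation: establishing $R_n = R_{n-1}^{d_n}$ requires knowing that unbranched covers behave as local hyperbolic isometries and interact predictably with the unique closed geodesic of a doubly connected hyperbolic surface. The first step is standard once one invokes the perfectness of $J(f)$, and the inductive bookkeeping in the last step is routine once the power-map structure is in place.
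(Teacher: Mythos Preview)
Your proposal is correct and follows essentially the same approach as the paper: both use the Riemann--Hurwitz relation (via Lemma~\ref{lem:bolsch}) to see that $f|_{U_{n-1}}$ is an unbranched cover and hence a local hyperbolic isometry, compare the lengths of closed geodesics via Lemma~\ref{lem:geods}(ii) to force $\Mod U_n = d_n\cdot\Mod U_{n-1}$, invoke the equality case of Lemma~\ref{lem:coverings} to conclude the induced map is a power map up to rotation and inversion, and then inductively adjust $\varphi_n$ by a self-isomorphism of $A(R^{D_n})$ to kill the extra parameters. Your version adds a useful point the paper leaves implicit, namely the exclusion of the degenerate models $\Cx^*$ and $\mathbb{D}^*$ via perfectness of $J(f)$, and your phrasing of the geodesic step through $f_*$ on $\pi_1$ is equivalent to the paper's direct image argument.
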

\begin{proof}
We proceed inductively; let us start with $n = 1$. Then, $U = U_0$ is conformally isomorphic to a unique symmetric annulus $A(R)$, and the isomorphism $\varphi_0:U_0\to A(R)$ is unique up to rotation and inversion. The next iterate $U_1$ is also isomorphic to some unique $A(S)$, but in this case we might have to post-compose the isomorphism $\varphi_1:U_1\to A(S)$ with a rotation or inversion. For now, this induces a holomorphic map $g_1:A(R)\to A(S)$ by
\[ g_1(z) = \varphi_1\circ f\circ \varphi_0^{-1}(z), \]
and we know that $\deg g_1 = \deg f|_{U_0} = d_1$ is finite -- we exercise our freedom of choice over $\varphi_1$ here, choosing it so that $d_1$ is also positive. Thus, by Lemma \ref{lem:coverings},
\[ d_1\leq \frac{\Mod A(S)}{\Mod A(R)}; \]
we want to show that equality holds, whence (also by Lemma \ref{lem:coverings}) $g_1(z) = e^{i\theta}z^{d_1}$, whereupon we exercise our freedom of choice again to rotate $\varphi_1$ and ensure that $e^{i\theta} = 1$. To this end, notice that, by the Riemann-Hurwitz formula (Lemma \ref{lem:bolsch}(i)), $f|_{U_0}$ (and hence $g_1$) has no critical points, and is therefore an unbranched covering map. This means that $g_1:A(R)\to A(S)$ is a local hyperbolic isometry; it takes the closed geodesic $\gamma(t) = \exp(2\pi it)$, $t\in[0, 1]$, to a closed geodesic of $A(S)$. By Lemma \ref{lem:geods}(ii), $g_1\circ\gamma$ must be another (monotonic) parametrisation of the unit circle -- but one that traverses it $d_1$ times. Since $\ell_{A(R)}(\gamma) = \ell_{A(S)}(g_1\circ\gamma)$, Lemma \ref{lem:geods}(ii) also tells us that
\[ \frac{2\pi^2}{\Mod A(R)} = \frac{2\pi^2d_1}{\Mod A(S)}, \]
whence $\Mod A(S) = d_1\cdot\Mod A(R)$ and we are done. The rest of the sequences $(\varphi_n)$ and $(g_n)$ can be built in a similar fashion, with $\varphi_{n+1}$ being rotated and inverted as necessary to accommodate $\varphi_n$ (which is already fixed at the $n$-th stage).
\end{proof}

An immediate consequence of Lemma \ref{lem:conj} is that, by induction,
\[ \text{$\varphi_n\circ f^n = G_n\circ\varphi_0$ for all $n\geq 1$,} \]
where $G_n := g_n\circ\cdots\circ g_0$, each $g_n$ is a power map, and the $\varphi_n$ are hyperbolic isometries. If $\mathcal{C}''$ and $\mathcal{L}''$ are the foliations given by Theorem \ref{thm:ann} for $G_n$, we can pull them back to obtain transversal foliations $\mathcal{C}' := (\varphi_0)^*\mathcal{C}'' = \{\varphi_0^{-1}(C) : C\in\mathcal{C}\}$ and $\mathcal{L}' := (\varphi_0)^*\mathcal{L}'' = \{\varphi_0^{-1}(L) : L\in\mathcal{L}\}$ on $U$, and -- since all $\varphi_n$ are hyperbolic isometries -- they encode the same dynamical information for $U$ and $f$ that we had for $A(R)$ and $G_n$.

This takes care of the case of \textit{constant} connectivity two; next, we complete the proof of Theorem \ref{thm:cutout}.

\begin{proof}[Proof of Theorem \ref{thm:cutout}]
Assume first that $U$ has eventual connectivity $3\leq k < +\infty$. We claim that $f:U_n\to U_{n+1}$ is a proper map for all large $n$; indeed, the only way for it \textit{not} to be proper is for $m = \deg f|_{U_n}$ to be infinite (since $f$ maps $\partial U_n$ onto $\partial U_{n+1}$, it follows from Ahlfors's first fundamental theorem that $f:U_n\to U_{n+1}$ either is proper and satisfies the Riemann-Hurwitz formula, or has infinite degree; see for example \cite[Theorem 5.2]{Hay64}, \cite[Theorem VI.1]{Tsu75}, or \cite[Theorems 2 and 3]{Bol99}). However, by Lemma \ref{lem:bolsch}(ii), since $c(U_{n+1}) = k\geq 3$ we should have $c(U_n) = +\infty$, which is a contradiction as $c(U_n) = k$.

Thus, we can apply the Riemann-Hurwitz formula to $f:U_n\to U_{n+1}$, which tells us that
\[ k - 2 = m\cdot(k - 2) + \delta_f(U_n). \]
Since $m$ and $\delta_f(U_n)$ are both non-negative integers and $k\geq 3$, the only solution is $\delta_f(U_n) = 0$ and $m = 1$. It follows that $f:U_n\to U_{n+1}$ is a conformal isomorphism for all large $n$, and thus that the hyperbolic metric is preserved by $f|_{U_n}$.

For $k = 2$, we take a sufficiently large $N$ that $c(U_n) = 2$ for all $n\geq N$, and apply Lemma \ref{lem:conj}. If $\Mod U_n$ is a constant sequence, then $f|_{U_n}$ is conjugated to rigid rotations of the same $A(R)$ for some $R > 1$, and so the hyperbolic metric is preserved. If $\Mod U_n$ is not constant, it is (by Lemma \ref{lem:conj}) an increasing sequence diverging to infinity, and we obtain transversal foliations $\mathcal{C}'$ and $\mathcal{L}'$ on $U_N$ that encode its dynamics. In order to transfer this knowledge to $U$, we pull the foliations back as $\mathcal{C} := (f^N)^*\mathcal{C}' = \{f^{-N}(C)\cap U : C\in\mathcal{C}'\}$ and $\mathcal{L} := (f^N)^*\mathcal{L}' = \{f^{-N}(L)\cap U : L\in\mathcal{L}'\}$. This process ``breaks down'' at the critical points $\mathrm{Crit}(f^N|_U)$ of $f^N$, which form a discrete set, but works conformally everywhere else. It follows that $\mathcal{C}$ and $\mathcal{L}$ are transversal laminations of $U$ that fail to be foliations at critical points of $f^N$. By their definitions and Theorem \ref{thm:ann}, it also follows that $\mathcal{C}$ is a contracting lamination, and $\mathcal{L}$ is an eventually isometric one.
\end{proof}

\section{Baker wandering domains} \label{sec:dir}
In this section, we will examine the internal dynamics of Baker wandering domains in direct tracts. Of course, in order to do so, we should first define the latter: an unbounded domain $D\subset \Cx$ with piecewise smooth boundary is said to be a \textit{direct tract} for the meromorphic function $f$ if $\Cx\setminus D$ is unbounded, $f$ has no poles in $D$, and there exists $R > 0$ such that $|f(z)| = R$ for $z\in\partial D$ while $|f(z)| > R$ for $z\in D$. Every meromorphic function with finitely many poles has a direct tract, and so do many with infinitely many poles (such as Euler's gamma function, for instance); this gives us a substantially larger class to study than entire functions, while still leaving us with plenty of machinery to do so.

Of course, when talking about the dynamics of Baker wandering domains, one must talk about the results of Bergweiler, Rippon, and Stallard \cite{BRS13}. Although they deal with entire functions, they remark that their results can be generalised to Baker wandering domains of meromorphic functions with direct tracts. The one key step not directly related to their techniques is to generalise a theorem -- originally proved by Zheng \cite{Zhe01} for functions with finitely many poles -- saying that the iterates of a Baker wandering domain contain large annuli $\{z : r_n < |z| < R_n\}$ with $R_n/r_n\to +\infty$. We now give a brief outline of how to do this.

Many of the results and tools used here were also introduced by Bergweiler, Rippon, and Stallard in a different paper \cite{BRS08}. Firstly, they showed that if $U$ is a Baker wandering domain of $f$ and $f$ has a direct tract $D$, then $\overline{U}_n\subset D$ for all large $n$, so that $D$ is in fact the only direct tract of $f$ and all components of $\Cx\setminus D$ are bounded. Secondly, they introduced the subharmonic function $v:\Cx\to[0, +\infty)$, defined as
\[ v(z) := \begin{cases} \log\frac{|f(z)|}{R}, & z\in D, \\ 0, & z\notin D, \end{cases} \]
where $D$ is a direct tract of $f$. Combined, these two things have key consequences for the distribution of zeros and poles of $f$.

\begin{lemma} \label{lem:ZP}
Let $f$ be a transcendental meromorphic function with a direct tract $D$ and a Baker wandering domain $U$. For $n\in\mathbb{N}$, let $\gamma_n\subset U_n$ be a Jordan curve surrounding the origin, and define $Z_n$ and $P_n$ as the number of zeros and poles of $f$ (respectively) surrounded by $\gamma_n$. Then,
 \[ \text{$Z_n - P_n\to +\infty$ as $n\to +\infty$.} \]
\end{lemma}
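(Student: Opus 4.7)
The plan is to reinterpret $Z_n-P_n$ as the Riesz mass of the subharmonic function $v$ in a well-chosen region, and then use the rapid-escape property of Baker wandering domains in direct tracts to show that this mass diverges.

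First, since $\gamma_n\subset U_n\subset D$ for all large $n$, the functions $\log|f|$ and $v$ agree up to the constant $\log R$ on a neighbourhood of $\gamma_n$. By Green's identity (applied distributionally), the change-of-argument formula becomes
\[ 2\pi(Z_n-P_n)=\oint_{\gamma_n}\partial_\nu\log|f|\, ds=\oint_{\gamma_n}\partial_\nu v\, ds=2\pi\,\mu(\Delta_n), \]
where $\mu$ is the Riesz measure of $v$ and $\Delta_n$ is the topological disk bounded by $\gamma_n$. Note that $\mu$ is supported on $\partial D$, so this identity expresses $Z_n-P_n$ as a balayage of the zero/pole divisor of $f$ onto $\partial D$. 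The task therefore reduces to showing $\mu(\Delta_n)\to+\infty$.

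Second, using the extension to direct tracts of the Bergweiler--Rippon--Stallard absorbing-annulus theorem sketched above, $U_n$ eventually contains a round annulus $\{r_n<|z|<R_n\}$ with $R_n/r_n\to+\infty$. Since any Jordan curve $\gamma_n\subset U_n$ surrounding the origin encloses the inner complementary component of $U_n$, which in turn contains $\{|z|<r_n\}$ in its interior minus $U_n$, I get $\mu(\Delta_n)\geq\mu(\{|z|<r_n\})$. The problem is now solely about the growth of the Riesz mass of $v$ on round disks.

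Third, I would combine two ingredients: the standard Jensen-type identity
\[ B(\rho)-B(\rho_0)=\int_{\rho_0}^{\rho}\frac{\mu(\{|z|<t\})}{t}\, dt \]
for the circle mean $B(\rho):=(2\pi)^{-1}\int_0^{2\pi} v(\rho e^{i\theta})\, d\theta$; and the lower bound $B(\rho)\geq\log(r_{n+1}/R)$ valid for $\rho\in(r_n,R_n)$, which holds because $\{|z|=\rho\}\subset U_n$ is mapped by $f$ into $U_{n+1}$, hence into $\{|w|\geq r_{n+1}\}$. The identity turns the divergence $\mu(\{|z|<\rho\})\to+\infty$ into the equivalent statement $B(\rho)/\log\rho\to+\infty$, and the rapid-escape estimate $\log r_{n+1}/\log R_n\to+\infty$ for Baker wandering domains in direct tracts (extending Zheng's result \cite{Zhe01} for functions with finitely many poles) delivers exactly the required super-logarithmic growth of $B$ along the sequence $\rho_n\in(r_n,R_n)$.

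The main obstacle is the rapid-escape estimate in the third step: for functions with finitely many poles it is classical, but its direct-tract version is the substantive technical content of this section, obtained by generalising the machinery of \cite{BRS08,BRS13} through the subharmonic function $v$. Once it is in hand, the rest is a routine combination of Green's identity with the subharmonic representation of the circle mean.
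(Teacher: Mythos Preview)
Your first step---rewriting $Z_n-P_n$ as the Riesz mass $\mu(\Delta_n)$ of the subharmonic function $v$ inside the region bounded by $\gamma_n$---is exactly how the paper begins. But from there you take a long detour that is, in the logical structure of this section, circular. You invoke the absorbing-annulus property of $U_n$ and the rapid-escape estimate $\log r_{n+1}/\log R_n\to+\infty$ in the direct-tract setting; however, the paper explicitly derives these facts \emph{from} Lemma~\ref{lem:ZP} (see the paragraph immediately following the proof: ``Lemma~\ref{lem:ZP} and the generalised Wiman--Valiron theorem \ldots\ allow us to extend Zheng's theorem to functions with a direct tract''). So the machinery you appeal to in your second and third steps is not available at this point, and using it to prove the lemma would beg the question. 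Your closing remark even identifies the rapid-escape estimate as ``the substantive technical content of this section'', without noticing that the lemma is meant to be an \emph{input} to that content, not a consequence of it.

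The paper's actual argument is far shorter and uses only facts already established in \cite{BRS08}. Since $f$ has neither zeros nor poles in $D$, the quantity $Z_n-P_n=\mu(\Delta_n)$ decomposes as a sum over the complementary components $K_1,\ldots,K_m$ of $D$ enclosed by $\gamma_n$, and \cite[Lemma~9.2]{BRS08} gives $\mu(K_i)\geq 1$ for each $i$, so $Z_n-P_n\geq m$. Because $f$ has a Baker wandering domain, every complementary component of $D$ is bounded, and since $U_n\to\infty$ the number $m$ of enclosed components tends to infinity. No growth estimates, no Jensen formula, and no rapid-escape input are needed.
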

\begin{proof}
First, notice that $f$ has neither zeros nor poles in $D$ by definition, which means that our problem reduces to counting zeros and poles for the complementary components of $D$, which are all bounded since $f$ has a Baker wandering domain.
Now, let $\mu_v$ denote the Riesz measure associated to the subharmonic function $v$ (see, for instance, \cite[Section 3.5]{Hay76}). If $K$ is a complementary component of $D$, then \cite[Lemma 9.2]{BRS08} tells us that $\mu_v(K)\geq 1$, and it follows from the argument principle that $\mu_v(K) = Z_K - P_K$, where $Z_K$ and $P_K$ are the number of zeros and poles of $f$ in $\mathrm{int}(K)$, respectively (see the proof of \cite[Lemma 9.1]{BRS08}).

Next, let $K_1, K_2, \ldots, K_m$ denote the complementary components of $D$ surrounded by $\gamma_n$; clearly, $m$ is finite and depends on $n$. Then, it follows that
\[ Z_n - P_n = \sum_{i=1}^m (Z_{K_i} - P_{K_i})\geq m; \]
since all complementary components of $D$ are bounded and $U_n = f^n(U)\to\infty$ as $n\to +\infty$, we also have $m\to +\infty$ and the conclusion follows.
\end{proof}

The function $v$ was also used by Bergweiler, Rippon, and Stallard to prove an analogue of the Wiman-Valiron theorem for meromorphic functions with direct tracts (see \cite[Theorems 2.2 and 2.3]{BRS08}). This, in turn, implies that if $f$ is a meromorphic function with a direct tract $D$, then the image of any sufficiently large annulus lying sufficiently far away from the origin contains another large annulus lying even farther away. This fact coupled with Lemma \ref{lem:ZP} shows that, as concerns Baker wandering domains, functions with a direct tract behave similarly to functions with finitely many poles.

As promised, Lemma \ref{lem:ZP} and the generalised Wiman-Valiron theorem (the latter to substitute for Bohr's theorem) allow us to extend Zheng's theorem to functions with a direct tract. This, in turn, serves as the starting point to generalise Bergweiler, Rippon, and Stallard's results \cite{BRS13} concerning the harmonic function
\begin{equation} \label{eq:h}
    \text{$h(z) := \lim_{n\to+\infty} \frac{\log|f^n(z)|}{\log|f^n(z_0)|}$ for $z\in U$,}
\end{equation}
where $z_0$ is any point in a Baker wandering domain $U$ of an entire function, and so to generalise their approach to describing the dynamics of Baker wandering domains -- as was stated above; we refer to \cite{BRS13} and \cite{BRS08} for the remaining details. Thus, given a transcendental meromorphic function $f$ with a Baker wandering domain $U$ and a direct tract $D$, and a point $z_0\in U$, Equation (\ref{eq:h}) defines a positive, non-constant harmonic function, and the properties of $h$ tell us many things about the internal dynamics of $U$.

Naturally, the reason we did this was so we could apply the results in \cite{BRS13} to our setting. However, we will have to change our notation slightly; since $h$ can be defined taking as starting points \textit{any} point in \textit{any} domain on the orbit, we will use $h(z; z_0, U)$ to mean the function defined in $U$ by Equation (\ref{eq:h}) using the base point $z_0\in U$. With that in mind, it is clear that $h$ is $f$-invariant in the sense that
\begin{equation} \label{eq:finv}
 h\left(f(z); f(z_0), U_1\right) = h(z; z_0, U);
\end{equation}
notice that, since every Baker wandering domain is bounded and $f$ preserves the Fatou and Julia sets, $f(U) = U_1$ in this case.

The level sets of $h$ form a lamination of $U$, and this lamination was already studied by Sixsmith \cite{Six13} in relation to the fast escaping set of entire functions. Here, we are interested in level sets of $h$ for another reason.

\begin{lemma} \label{lem:h}
Let $f$ be a transcendental meromorphic function with a direct tract $D$ and a Baker wandering domain $U$. Choose $z_0\in U$, and define $h:U\to(0, +\infty)$ according to Equation (\ref{eq:h}). Then, the level sets of $h$ form a contracting lamination of $U$.
\end{lemma}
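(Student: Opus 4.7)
The plan is to verify the two clauses of the definition of a contracting lamination: first that the level sets of $h$ really form a lamination of $U$, and then that points on a common leaf have their iterates approaching each other in the hyperbolic metric.

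For the laminar structure, the function $h:U\to(0,+\infty)$ is non-constant and harmonic, so its critical set $\mathrm{Crit}(h)$ is discrete and hence at most countable. Set $Y := U\setminus\mathrm{Crit}(h)$; at each $p\in Y$ the gradient $\nabla h$ does not vanish, so on a simply connected neighbourhood $V$ of $p$ one can choose a harmonic conjugate $\tilde h$ of $h$ and form the holomorphic map $g := \tilde h + ih$, which satisfies $g'(p)\neq 0$. Shrinking $V$, $g$ is a conformal chart that sends the level sets $\{h = \text{const}\}$ onto horizontal lines $\{\Im z = \text{const}\}$, which is precisely the straightening condition in the definition of a lamination. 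The leaves of $\mathcal{C}$ are then the connected components of the level sets of $h|_Y$.

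For the contracting property, I would iterate the $f$-invariance relation (\ref{eq:finv}) to conclude that, if $z$ and $w$ lie on the same leaf, then for every $n$ the iterates $f^n(z)$ and $f^n(w)$ lie on the same level set of $h(\cdot;f^n(z_0),U_n)$. I would then invoke the direct-tract generalisation of the BRS machinery discussed earlier in this section (Lemma \ref{lem:ZP} together with the generalised Wiman--Valiron theorem of \cite{BRS08}): for all large $n$, $U_n$ contains an absorbing round annulus $B_n = \{z : r_n < |z| < R_n\}$ with $\mathrm{Mod}\,B_n\to+\infty$, the orbit of every point of $U$ eventually enters $B_n$, and on such annuli $f$ acts essentially as a power map. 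Thus $f^n|_U$ is, up to conformal identification, approximately a composition of power maps of the type studied in Subsection \ref{ssec:model}. The Schwarz--Pick inclusion $B_n\subset U_n$ gives $d_{U_n}\leq d_{B_n}$, and the length estimate in Lemma \ref{lem:geods}(ii) shows that the hyperbolic diameter within $B_n$ of the circle $\{|z|=|f^n(z)|\}$ is $O(1/\mathrm{Mod}\,B_n)$. Exactly as in the proof of Theorem \ref{thm:ann}(i), this drives $d_{U_n}(f^n(z),f^n(w))\to 0$.

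The main obstacle will be justifying rigorously that level sets of $h$ on $U_n$ correspond, up to bounded angular oscillation, to round circles inside the absorbing annulus $B_n$, so that the annulus-model estimate indeed applies. This requires quantitative control — obtained from the subharmonic function $v$ and the direct-tract Wiman--Valiron theorem — showing that on the bulk of $B_n$ the quantity $\log|f^n(z)|$ is asymptotically proportional to $\log|f^n(z_0)|$, with only a small angular error. Once this correspondence is in place, the contraction follows as above, and $\mathcal{C}$ is genuinely a contracting lamination.
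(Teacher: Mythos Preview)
Your laminar-structure paragraph is fine and matches what the paper does implicitly. The divergence is in the contraction argument, and the ``obstacle'' you flag at the end is real for your route but is entirely avoidable --- the paper never compares level curves to round circles and never needs the Wiman--Valiron-type control you describe.

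The paper's argument is intrinsic to the hyperbolic metric of $U_n$ and runs as follows. One first proves (this is the separate Lemma~\ref{lem:closed}) that every level curve of $h$ is \emph{closed}. Given that, take a simple closed level curve $\gamma\subset U$ and set $\gamma_n=f^n\circ\gamma$; for large $n$, $\gamma_n$ is a simple closed curve in $U_n$ surrounding the origin. The argument principle together with Lemma~\ref{lem:ZP} gives that $f\circ\gamma_n$ winds $d_n=Z_n-P_n\to\infty$ times around the origin, so $f\circ\gamma_n$ parametrises a simple closed curve $\gamma_{n+1}$ traversed $d_n$ times. Now apply Schwarz--Pick to $f:U_n\to U_{n+1}$ (not to an inclusion $B_n\hookrightarrow U_n$): $\ell_{U_{n+1}}(f\circ\gamma_n)\le\ell_{U_n}(\gamma_n)$, hence $\ell_{U_{n+1}}(\gamma_{n+1})\le\ell_{U_n}(\gamma_n)/d_n$, and the length collapses. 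Two points on the same leaf are then at hyperbolic distance at most this length, so the contraction follows. The infinite-connectivity case is handled by showing (via \cite[Lemma~2]{RS08} and Lemma~\ref{lem:ZP}) that any non-null-homotopic closed level curve has an iterate that surrounds the origin, reducing to the previous case.

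In short: your plan trades a soft winding-number/length argument for a hard quantitative geometric comparison. The paper's key step you are missing is the pair ``level curves are closed'' $+$ ``$f$ wraps them $d_n$ times by the argument principle,'' which makes the annulus model and the round-circle approximation unnecessary.
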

Before proving Lemma \ref{lem:h}, we want to convince ourselves that the level curves of $h$ are ``cilivised''. Of course, being level sets of a harmonic function, we know that they are made of analytic curves, but we want more than that.
\begin{lemma} \label{lem:closed}
In the setting of Lemma \ref{lem:h}, every level curve of $h$ is closed.
\end{lemma}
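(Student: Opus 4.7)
The plan is to leverage the $f$-invariance of $h$ together with the asymptotic geometry of the iterates $U_n$ that was sketched earlier in the section via the generalised Wiman-Valiron machinery of \cite{BRS08} and the results of \cite{BRS13}. Fix $t$ in the range of $h$ and let $\gamma$ be a connected component of $\{z\in U:h(z)=t\}$; the aim is to show that $\gamma$ is a closed Jordan curve in $U$.

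The key observation is that, by the $f$-invariance Equation (\ref{eq:finv}), the image $f^n(\gamma)$ lies inside the level set $\{w\in U_n:h(w;f^n(z_0),U_n)=t\}$ for every $n\geq 0$. The generalisation of Zheng's theorem to functions with a direct tract, combining Lemma \ref{lem:ZP} with the Wiman-Valiron theorem of \cite[Theorems 2.2 and 2.3]{BRS08}, produces for all large $n$ absorbing annuli $B_n=\{r_n<|z|<R_n\}\subset U_n$ with $R_n/r_n\to\infty$, into which every compact subset of $U$ is eventually mapped. Hence one should have $f^n(\gamma)\subset B_n$ for all sufficiently large $n$, and on $B_n$ the same Wiman-Valiron estimates force $h(\cdot;f^n(z_0),U_n)$ to be asymptotic to $\log|z|/\log|f^n(z_0)|$. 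Its level sets in $B_n$ are then Jordan curves surrounding the origin, so $f^n(\gamma)$ itself must be a closed Jordan curve surrounding $0$ inside $U_n$.

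To finish, the intention is to pull back through $f^n$. Since $\overline{U_k}\subset D$ for all large $k$, the restriction $f^n\colon U\to U_n$ is a proper holomorphic map; iterating the Riemann-Hurwitz formula from Lemma \ref{lem:bolsch}(i), together with the finite connectivity at each stage, yields finite degree, so the preimage of a Jordan curve is a union of finitely many closed analytic curves meeting only at the critical points of $f^n|_U$. The component $\gamma$ is one of these curves, hence closed. The main obstacle is extracting from the Wiman-Valiron theorem of \cite{BRS08} a sufficiently quantitative description of $h$ on the absorbing annuli $B_n$ so that its level sets there are genuinely Jordan curves around the origin; once that is granted, the argument proceeds exactly as in the entire case treated by \cite{BRS13}, and the topological pull-back step is routine.
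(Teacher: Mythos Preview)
Your argument contains a circularity at its very heart. The absorbing property of the annuli $B_n$ that you invoke says that every \emph{compact} subset of $U$ is eventually mapped into $B_n$. But a connected component $\gamma$ of a level set of $h$ is compact in $U$ precisely when it is a closed curve --- which is exactly what you are trying to prove. If $\gamma$ were \emph{not} closed, then (being a level curve of a harmonic function) it would escape to $\partial U$; since $f^n:U\to U_n$ is proper, $f^n(\gamma)$ would then escape to $\partial U_n$ and could not lie inside any round annulus $B_n\Subset U_n$. So the step ``Hence one should have $f^n(\gamma)\subset B_n$ for all sufficiently large $n$'' is not merely unjustified: under the hypothesis you are trying to exclude, it is false.

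The paper's proof exploits precisely this failure. It argues by contradiction: if $\gamma$ is not closed, then (using \cite[Theorem~1.6(b)]{BRS13} and the maximum principle) it must escape to the \emph{outer} boundary of $U$, and by properness so must each $f^n(\gamma)$. One then picks a \emph{second} level set $\Gamma$ at a strictly higher level $L'>\max\{L,1\}$; for this one, \cite[Theorem~7.1]{BRS13} guarantees that $f^n(\Gamma)$ eventually contains a Jordan curve $\Gamma_n$ around the origin lying in an annulus disjoint from, and surrounding, the annulus that contains the points of $f^n(\gamma)$. Since $f^n(\gamma)$ reaches the outer boundary of $U_n$, it must cross $\Gamma_n$, contradicting the fact that distinct level sets of $h(\cdot;f^n(z_0),U_n)$ are disjoint. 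The essential idea you are missing is this use of an auxiliary level curve as a ``barrier'' rather than trying to control $\gamma$ itself.
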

\begin{proof}
Assume that this is not the case; that is, that $h$ has a level curve $\gamma$ with $h(\gamma) = L\in\mathbb{R}_+$ that is not closed. Since $h$ is harmonic, $\gamma$ must escape to the boundary of $U$, and by \cite[Theorem 1.6(b)]{BRS13} (which tells us that $h$ has a continuous extension to $\partial U\setminus\partial\widetilde{U}$, and is constant there) and the maximum principle it must escape to the outer boundary\footnote{Bergweiler, Rippon, and Stallard use a different definition of the outer boundary, but -- fortunately -- it is equivalent to ours on bounded domains.} of $U$. Note that, since every $U_n$ is bounded and the Julia set is completely invariant, $f^n:U\to U_n$ is a proper map for every $n$ (see \cite[Lemma 4]{RS08}), and therefore $f^n(\gamma)$ will always be a level curve of $h(f^n(\cdot); f^n(z_0), U_n)$ that escapes to the outer boundary of $U_n$.

Take now another level set $\Gamma$ of $h$ for which $h(\Gamma) = L' > \max\{L, 1\}$ (by the maximum principle, such a level set must be non-empty for an appropriate choice of $L'$), and any point $z\in \gamma$. The definition of $h$ implies that $|f^n(z_0)|^{L - \epsilon_n} < |f^n(z)| < |f^n(z_0)|^{L + \epsilon_n}$, where $\epsilon_n\searrow 0$; in other words, $f^n(z)$ lies in some definite annulus $A_n$. At the same time, \cite[Theorem 7.1]{BRS13} says that $f^n(\Gamma)$ has (for large enough $n$) a connected component $\Gamma_n\subset U_n$ that is a Jordan curve surrounding the origin and lying in the annulus
\[ A_n' := \{z : |f^n(z_0)|^{L'(1 - \epsilon_n')} \leq |z| \leq |f^n(z_0)|^{L'(1 + \epsilon_n'')}\}, \]
where $\epsilon_n'$ and $\epsilon_n''$ are positive sequences going to zero with $n\to+\infty$. Most importantly, we see that if $n$ is large enough the annuli $A_n$ and $A_n'$ are disjoint, with $A_n'$ surrounding $A_n$.

Finally, recall that $f^n(\gamma)$ was supposed to escape to the outer boundary of $U_n$. It follows from the Jordan curve theorem that $f^n(\gamma)$ intercepts $\Gamma_n$, which is a contradiction since both are supposed to be level curves of $h(f^n(\cdot); f^n(z_0), U_n)$ corresponding to different levels (by Equation (\ref{eq:finv})).
\end{proof}
Thus pacified, we can prove Lemma \ref{lem:h}.
\begin{proof}[Proof of Lemma~\ref{lem:h}]
Let $\gamma\subset U$ be a simple closed level curve of $h$, and define $\gamma_n = f^n\circ\gamma\subset U_n$. Notice that $\gamma_n$ is another closed level curve of $h$ by Equation (\ref{eq:finv}), and it is clear from the maximum principle that every closed subcurve of $\gamma_n$ surrounds at least one bounded component of $\Cx\setminus U_n$. If $n$ is sufficiently large, then $\gamma_n$ is (by \cite[Theorem 1.3]{BRS13}) contained in a large annulus $C_n\subset U_n$ centred at $0$, with its iterates $f^m\circ\gamma_n$ being contained in similar annuli $C_{m+n}\subset U_{m+n}$. It follows that, for such $n$, $\gamma_n$ is a simple closed curve, which we will take to be traversed once. Assume now that $U$ has eventual connectivity two (we will postpone proving that $U$ has eventual connectivity either two or infinity to the end of this section). Then (assuming that $n$ is large enough), $\gamma_n$ surrounds the \textit{only} bounded complementary component of $U_n$, and by the argument principle $f\circ\gamma_n$ winds around the origin $d_n = Z_n - P_n$ times (we are using the notation of Lemma \ref{lem:ZP}). We see that $f\circ\gamma_n$ is a simple closed curve traversed $d_n$ times; let $\gamma_{n+1}$ stand for the same curve traversed only once. Since $\ell_{U_{n+1}}(f\circ\gamma_n) \leq \ell_{U_n}(\gamma_n)$ by the Schwarz-Pick lemma, we have that
\[ \ell_{U_{n+1}}(\gamma_{n+1}) \leq \frac{\ell_{U_n}(\gamma_n)}{d_n}, \]
and by induction and the $f$-invariance of $h$ we conclude that
\[ \text{$\ell_{U_{m+n}}(\gamma_{n+m}) \to 0$ as $m\to+\infty$.} \]
Therefore, if $z$ and $w$ are any two points of $\gamma_n$, we clearly have
\[ \text{$d_{U_{m+n}}\left(f^m(z), f^m(w)\right) \leq \ell_{U_{m+n}}(\gamma_{m+n})\to 0$ as $m\to +\infty$.} \]

This concludes the proof for eventual connectivity two; if $c(U) = +\infty$, we must deal with the simple closed level curves $\gamma\subset U$ of $h$ that do not surround the component of $\Cx\setminus U$ containing the origin (by Lemma \ref{lem:closed}, we have no non-closed level curves to deal with). However, by \cite[Lemma 2]{RS08}, any closed curve $\gamma\subset U$ that is not null-homotopic in $U$ must have an iterate $f^n\circ\gamma$ that surrounds a pole of $f$, and therefore a bounded complementary component of $D$. In that case, by Lemma \ref{lem:ZP} and the argument principle, $f^{n+1}\circ\gamma$ surrounds the origin, and the previous arguments apply.
\end{proof}

Finally, we complete the proof of Theorem \ref{thm:Baker}.
\begin{proof}[Proof of Theorem \ref{thm:Baker}]
First, we establish the existence of an eventual connectivity of $U$; this will require only a small adaptation of Kisaka and Shishikura's argument for \cite[Theorem A]{KS08}.

Since every $U_n$ is bounded, the Riemann-Hurwitz formula tells us that $c(U_n)$ is a non-increasing sequence, which means that $U$ has a well-defined eventual connectivity. Now, if this connectivity is some finite $k\geq 3$, Theorem \ref{thm:cutout} implies that $f:U_n\to U_{n+1}$ is one-to-one for all large $n$. Hence, if $\gamma_n\subset U_n$ is a Jordan curve surrounding the origin, the argument principle tells us that (with the notation of Lemma \ref{lem:ZP}) $Z_n - P_n = 1$ for every large $n$, directly contradicting Lemma \ref{lem:ZP}.

Now, we associate the eventual connectivity of $U$ to its internal dynamics. If $c(U) < +\infty$, then $U$ has eventual connectivity two, and (by Lemma \ref{lem:ZP}) cannot be eventually isometric. It follows from Theorem \ref{thm:cutout} that $U$ is a trimodal domain with contracting and eventually isometric laminations $\mathcal{C}$ and $\mathcal{L}$ (respectively), and the uniqueness of the laminations together with Lemma \ref{lem:h} imply that $\mathcal{C}$ is made of level sets of $h$.

If $c(U) = +\infty$, we still have the contracting lamination given by Lemma \ref{lem:h}, and it remains to see what happens for pairs of points on different leaves. To this end, we invoke the Harnack metric $\chi_\Omega$ of a domain $\Omega$; it is defined as
\[ \chi_\Omega(z, w) := \sup \left\{\left|\log\frac{u(z)}{u(w)}\right| : \text{$u$ is positive and harmonic in $\Omega$}\right\}. \]
Its relevance to us lies on the fact that, if $\Omega$ is a bounded domain, it satisfies $\chi_\Omega(z, w) \leq d_\Omega(z, w)$ for all $z, w\in\Omega$ (see, for instance, \cite{BS06}). Now, since our positive harmonic function $h$ is $f$-invariant, we have for any pair $z, w\in U$ on different level sets of $h$ the following:
\[ \left|\log\frac{h\left(f^n(z); f^n(z_0), U_n\right)}{h\left(f^n(w); f^n(z_0), U_n\right)}\right| = \left|\log\frac{h(z; z_0, U)}{h(w; z_0, U)}\right| > 0. \]
Thus, for any $n\in\mathbb{N}$,
\[ \left|\log\frac{h(z; z_0, U)}{h(w; z_0, U)}\right| \leq \chi_{U_n}\left(f^n(z), f^n(w)\right) \leq d_{U_n}\left(f^n(z), f^n(w)\right); \]
this gives us a positive lower bound for $d_{U_n}\left(f^n(z), f^n(w)\right)$, which cannot be eventually constant by the Schwarz-Pick lemma. Indeed, Bergweiler, Rippon, and Stallard show in \cite[Theorem 1.7]{BRS13} that having infinite connectivity means that the $U_n$ meet infinitely many critical points of $f$, and so $f|_{U_n}$ cannot be a local hyperbolic isometry. This completes the proof.
\end{proof}

\section{A wandering domain with no eventual connectivity} \label{sec:ex}

In this section, we prove Theorem \ref{thm:ex}. The construction of $g$ is based on the ideas of Baker, Kotus and Lü \cite{BKL90}: we use approximation theory to construct a sequence of meromorphic functions $(g_n)_{n\geq 0}$ such that each $g_n$ is responsible for the $n$-th step in the orbit of $U$, and $g = \sum_{n\geq 0} g_n$ defines a meromorphic function satisfying our claims. Figure \ref{fig:model} illustrates what the construction is supposed to look like: in black, we have what we hope will be the wandering domains, and in grey we see the larger sets where the approximation takes place. The coloured arrows denote the action of our ``model map'', which $g$ is supposed to approximate and is made up of the following functions:
\begin{enumerate}
    \item $\tau$ is an exponential, mapping a vertical strip onto an annulus;
    \item $\psi$ is a re-scaled Joukowski mapping (see below), and takes a symmetric annulus around the unit circle onto an ellipse;
    \item $\phi$ is the function $z\mapsto (z - i)^{-1} + (z + i)^{-1}$. Its relevant properties are outlined below, but suffice to say it maps a subset of $\mathbb{D}$ conformally onto a vertical strip;
    \item $\sigma$ is an affine map.
\end{enumerate}
\begin{figure}[!h]
    \centering
    \begin{tikzpicture}
        \node[anchor=south west, inner sep=0] at (0,0)
        {\includegraphics[width=0.9\textwidth]{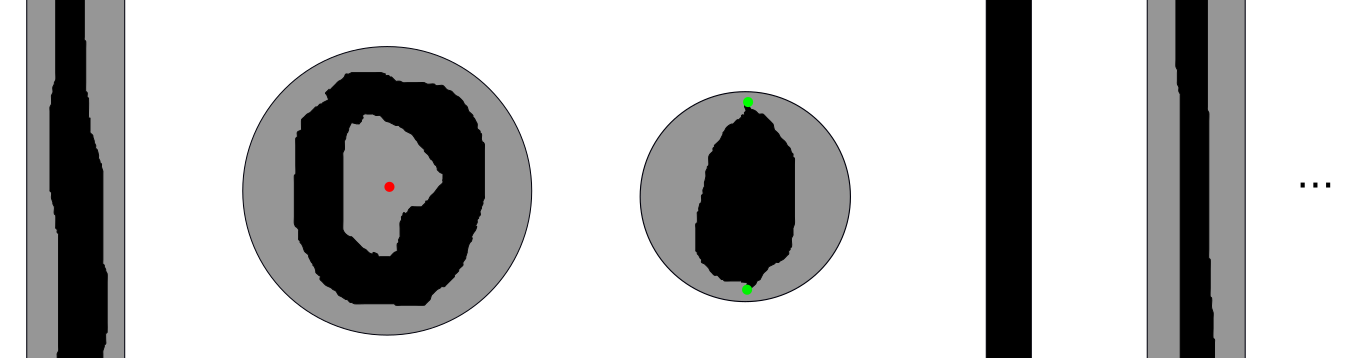}};

        \draw[->,blue] (1.45,3) to [out=0,in=130] (2.6,2.5);
        \node[blue] at (2.15,3.05) {$\tau$};

        \draw[->,red] (5.2,2.5) to [out=30,in=140] (6.5,2.3);
        \node[red] at (5.9,2.85) {$\psi$};

        \draw[->,green] (8.4,2.3) to [out=30,in=180] (9.7,2.6);
        \node[green] at (9,2.8) {$\phi$};

        \draw[->,purple] (10.45,2.6) to [out=0,in=180] (11.4,2.6);
        \node[purple] at (10.95,2.75) {$\sigma$};
    \end{tikzpicture}
    \caption{The first four steps of the model map, with approximating sets shown in grey and our intended wandering domains in black. The red and green dots denote the poles of our model map.}
    \label{fig:model}
\end{figure}

To this end, we must first show that the sequence $(g_n)_{n\geq 0}$ can be constructed to have the desired mapping properties, and then that the resulting Fatou components $U_n$ actually possess the desired topological characteristics. We start with some preliminary observations about the construction of the model map.

First, we choose a real constant $r\in (2, e)$, and a positive $\epsilon < 1/r$ (as we proceed, we will impose further restriction on $\epsilon$). Next, consider
\[ \phi(z) = \frac{1}{z - i} + \frac{1}{z + i} = \frac{2z}{z^2 + 1}; \]
this rational function of degree two has critical points $\pm 1$, which are also fixed points. More importantly for us, the vertical strip $\{z : |\Re z| < 1\}$ has a simply connected pre-image component (of degree one) under $\phi$ in $\mathbb{D}$, and this component touches the unit disc exactly at $\pm 1$ and $\pm i$. Since $r < e$, we know that $\phi^{-1}\left(\{z : |\Re z| < \log r - \epsilon\}\right)$ (of course, we take $\epsilon < \log r$) has a simply connected component $D\subset\mathbb{D}$ that touches $\partial D$ exactly at $\pm i$. Now, we want to choose a constant $\lambda\in\Cx$ so that the (re-scaled) Joukowski mapping
\[ \psi(z) = \lambda\left(z + \frac{1}{z}\right) \]
maps the annulus $\{z : 1/r < |z| < r\}$ into\footnote{In particular, since $D\subset\mathbb{D}$, $\lambda$ satisfies $|\lambda| < 1/2$.} $\{z\in D : \dist(z, \partial D) > \epsilon\}$, and then choose $R > r$ such that $\psi\left(\{z : 1/R < |z| < R\}\right)\supset \{z : 1 + \epsilon\}$, and some $\delta > \epsilon$; we will also require $R$ to satisfy other constraints to be specified ahead, but $\delta$ is relatively ``free''. For now, we make a few observations about the Joukowski mapping $z\mapsto z + 1/z$: it is a rational function of degree two, with critical points at $\pm 1$ and roots at $\pm i$; it maps the unit circle in two-to-one fashion onto the closed interval $[-2, 2]$, and is symmetric under $z\mapsto 1/z$. Consequently, $\psi$ maps annuli of the form $\{z : 1/C < |z| < C\}$, $C > 1$, properly and with degree two onto simply connected ellipses containing the origin.

With these preliminaries in place, we are ready to define our approximating sets. We will need sequences $l_n$ and $m_n$, $n\geq 0$, constructed as follows. Start with $l_0 = 1$, and then choose $m_0$ such that $m_0 - \log(2R') > l_0$, where the criteria for selecting $R' > R$ will be explained further ahead. The next value to be chosen is $l_1$, taken to satisfy $l_1 > m_0 + \log(2R')$. To choose $m_1$, we choose a different criterion, namely $m_1 - R > l_1$ and then for $l_2$ we want $l_2 > m_1 + R$. The next constant $m_2$ is chosen so that $m_2 - 1 - \delta > l_2$. For $l_3$, we pick a number so that $l_3 > m_2 + 1 + \delta$, and for $m_3$ we want $m_3 > l_3 + \log r + 1$. Finally, the last ``different'' step in this construction is $l_4$, which is taken so that $l_4 > m_3 + \log r + 1$. From now  on, the construction can be carried out recursively with the same basic structure as above (we start by treating $m_4$ as $m_0$), cycling between the rules with ``period'' four.

With these sequences ready, we define our sets as:
\[ F_n := \begin{cases}
            H_n\cup E_n = \{z : \Re z\leq l_n\}\cup \{z : |\Re z - m_n|\leq \log(2R')\}, & n\mod 4 = 0, \\
            H_n\cup E_n = \{z : \Re z\leq l_n\}\cup \{z : |z - m_n|\leq R\}, & n\mod 4 = 1, \\
            H_n\cup E_n = \{z : \Re z\leq l_n\}\cup \{z : |z - m_n|\leq 1 + \delta\}, & n\mod 4 = 2, \\
            H_n\cup E_n = \{z : \Re z\leq l_n\}\cup \{z : |\Re z - m_n|\leq \log r\}, & n\mod 4 = 3. \end{cases} \]

Let us make a few remarks about these sets: first, all of them are closed, and all of them contain a left half-plane and exactly one other component. If $n\mod 4 = 0$ or $n\mod 4 = 3$, these are vertical strips, albeit of different widths; if $n\mod 4 = 1$ or $n\mod 4 = 2$, these are discs of different radii. See Figure \ref{fig:sets} for the most ``relevant'' parts of these sets, and how they relate to each other.

\begin{figure}[!h]
    \centering
    \includegraphics[width=0.9\columnwidth]{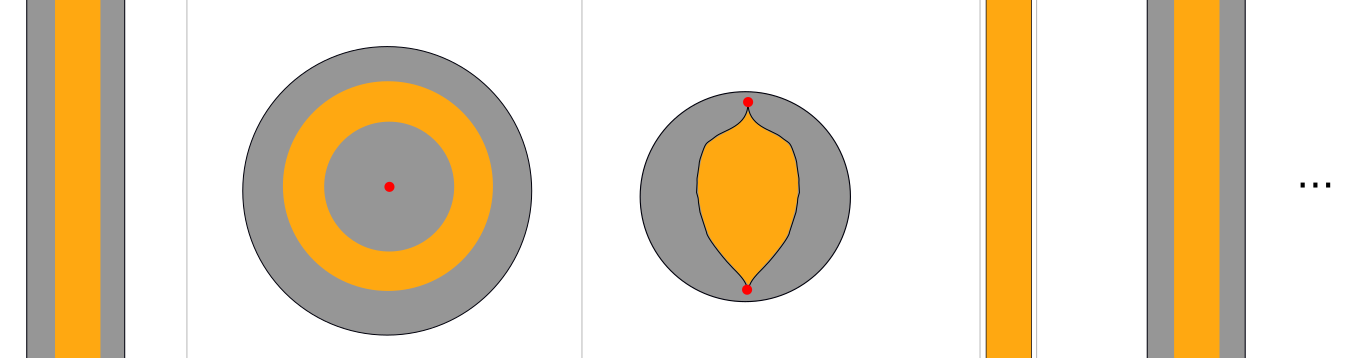}
    \caption{The ``main parts'' of the sets involved in this construction. Poles of the $g_n$, which are also the poles of $g$, are marked in red. In orange, we see some of the $G_n\subset E_n$, and, in light grey, the lines $L_n$.}
    \label{fig:sets}
\end{figure}

We will also need vertical lines $L_n := \{z : \Re z = x_n\}$. These are set up so that $m_n < x_n < l_{n+1}$ (except for $n\mod 4 = 2$, in which case we have $l_{n+1} < x_n < m_{n+1}$) and so that $L_n\cap F_m = \emptyset$ for any $n$ and $m$. We will reserve the right to fine-tune the position of some of these lines later.

We are ready to start our approximations. First, we must map $E_0$ into $E_1$ while simultaneously mapping $H_0$ into itself. To this end, we apply Arakelyan's theorem (see, for instance, \cite[~Section IV.C]{Gai87}) to the closed set $F_0\cup L_0$, obtaining an entire function $g_0$ satisfying
\begin{equation} \label{eq:aprox1}
  \begin{cases}
    |g_0(z)| < \epsilon_0^3, & z\in H_0, \\
    |g_0(z) - \exp\left((z - m_0)/2\right) - m_1| < \epsilon_0^3, & z\in E_0, \\
    |g_0(z)| < \epsilon_0^3, & z\in L_0 \end{cases}
\end{equation}
where $\epsilon_0$ is the first term in a sequence of positive numbers such that $\epsilon_n < \epsilon/10^n$.

Our next step is to approximate an appropriately translated version of $\psi$ in $E_1$. However, since $\psi$ has a pole, a simple application of Arakelyan's theorem will not suffice.

\begin{claim}
There exists a function $g_1\in\mathcal{M}(\Cx)$ with a single pole at $m_1$ such that
\begin{equation} \label{eq:aprox2}
 \begin{cases}
    |g_1(z)| < \epsilon_1^3, & z\in H_1, \\
    |g_0(z) + g_1(z) - \psi(z - m_1) - m_2| < \epsilon_1^3, & z\in E_1, \\
    |g_0(z) + g_1(z)| < \epsilon_1^3, & z\in L_1. \end{cases}
\end{equation}
\end{claim}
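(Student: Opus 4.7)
The natural plan is to remove the pole of $\psi(z-m_1)$ analytically and then reduce the claim to an approximation by entire functions via Arakelyan's theorem. Since $\psi(w)=\lambda(w+1/w)$ has a simple pole at $0$ with residue $\lambda$, the function $\psi(z-m_1)$ has principal part $P(z):=\lambda/(z-m_1)$ at $m_1$, and $\psi(z-m_1)-P(z)=\lambda(z-m_1)$ is entire. I would therefore seek $g_1$ in the form $g_1=P+\tilde{g}_1$ with $\tilde{g}_1\in\mathcal{O}(\Cx)$; this automatically places the single pole at $m_1$ with the correct principal part, and shifts the problem into a purely entire-approximation problem for $\tilde{g}_1$.

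Next, substitute this ansatz into the three inequalities in the statement. They become, respectively, $\tilde{g}_1(z)\approx-P(z)$ on $H_1$, $\tilde{g}_1(z)\approx \lambda(z-m_1)+m_2-g_0(z)$ on $E_1$ (using $\psi(z-m_1)-P(z)=\lambda(z-m_1)$), and $\tilde{g}_1(z)\approx -g_0(z)-P(z)$ on $L_1$, all with error less than $\epsilon_1^3$. Define the target function $h$ on $F_1\cup L_1$ piecewise by these three expressions. The spacing $m_1-R>l_1$ and $x_1>m_1+R$ ensures $m_1\notin H_1\cup L_1$, so $P$ is holomorphic on each piece; since $H_1$, $E_1$, $L_1$ are pairwise disjoint closed sets, $h$ is continuous on $F_1\cup L_1$ and holomorphic on its interior, which is all that Arakelyan's theorem requires of the target.

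It then remains to apply Arakelyan's theorem to obtain an entire $\tilde{g}_1$ with $|\tilde{g}_1-h|<\epsilon_1^3$ on $F_1\cup L_1$, and set $g_1:=P+\tilde{g}_1$. The one non-routine verification is that $F_1\cup L_1$ is an Arakelyan set, i.e., that $\Chat\setminus(F_1\cup L_1)$ is connected and locally connected at $\infty$; this is the step I expect to require the most care, precisely because the vertical line $L_1$ disconnects the complement inside $\Cx$ into two pieces, namely the strip $\{l_1<\Re z<x_1\}\setminus E_1$ and the half-plane $\{\Re z>x_1\}$. Both pieces are unbounded, however, so each has $\infty$ in its $\Chat$-closure, and adjoining $\infty$ re-connects them. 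For local connectedness, I would observe that every neighbourhood of $\infty$ contains one of the form $(\{|z|>M\}\cup\{\infty\})\setminus(F_1\cup L_1)$, which is itself a connected neighbourhood of $\infty$ in $\Chat\setminus(F_1\cup L_1)$: all of its pieces (two half-strips to the left of $L_1$, plus the region to the right) meet at $\infty$. Once this geometric check is in hand, Arakelyan delivers $\tilde{g}_1$ and the claim follows.
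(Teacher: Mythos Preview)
Your proof is correct and follows essentially the same idea as the paper: subtract off the pole-carrying part, apply Arakelyan's theorem to obtain an entire approximant, then add the pole back. The only cosmetic difference is that you peel off just the principal part $P(z)=\lambda/(z-m_1)$, whereas the paper subtracts the whole meromorphic piece $\psi(z-m_1)+m_2$ and approximates $h_1:=g_1-\psi(z-m_1)-m_2$; either decomposition leads to an entire target on each of $H_1$, $E_1$, $L_1$, and your extra verification that $F_1\cup L_1$ is an Arakelyan set is a point the paper leaves implicit.
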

\begin{proof}
We apply Arakelyan's theorem to find an auxiliary entire function $h_1$ satisfying
\[ \begin{cases}
    |h_1(z) + \psi(z - m_1) + m_2| < \epsilon_1^3, & z\in H_1, \\
    |g_0(z) + h_1(z)| < \epsilon_1^3, & z\in E_1, \\
    |g_0(z) + h_1(z) + \psi(z - m_1) + m_2| < \epsilon_1^3, & z\in L_1, \end{cases} \]
and define $g_1$ as $g_1(z) = h_1(z) + \psi(z - m_1) + m_2$. It follows easily from the definition that $g_1$ has a single pole, which is at $m_1$, and satisfies the desired inequalities.
\end{proof}

By relying on a similar auxiliary function, we obtain the next step in our construction: a meromorphic function $g_2$ with poles at $m_2\pm i$ such that
\begin{equation} \label{eq:aprox3}
 \begin{cases}
    |g_2(z)| < \epsilon_2^3, & z\in H_2, \\
    |g_0(z) + g_1(z) + g_2(z) - \phi(z - m_2) - m_3| < \epsilon_2^3, & z\in E_2. \end{cases}
\end{equation}
Notice that we did not add the approximation on $L_2$ to the requirements; that will be up to our next function, which is the entire function $g_3$ satisfying
\begin{equation} \label{eq:aprox4}
 \begin{cases}
    |g_3(z)| < \epsilon_3^3, & z\in H_3, \\
    |g_0(z) + g_1(z) + g_2(z) + g_3(z) - (z - m_3)(\log r - \epsilon)/\log r - m_4| < \epsilon_3^3, & z\in E_3, \\
    |g_0(z) + g_1(z) + g_2(z) + g_3(z)| < \epsilon_3^3, & z\in L_2\cup L_3. \end{cases}
\end{equation}

Thus armed, we proceed inductively, constructing a sequence $(g_n)_{n\geq 0}$ of approximating meromorphic functions. Since the functions get progressively smaller on progressively larger left half-planes, the sum
\[ \sum_{n\geq 0} g_n \]
converges locally uniformly to a transcendental meromorphic function $g$ with infinitely many poles, which are exactly at $m_{4k+1}$ and $m_{4k+2}\pm i$ for $k\geq 0$. Furthermore, for any $z\in E_n$, $g$ never differs from the model map by more than
\[ \sum_{m\geq n} \epsilon_m^3\leq 1000\epsilon^3/999, \]
which -- if $\epsilon > 0$ is chosen sufficiently small -- is much smaller than $\epsilon$. If we define the sets
\[ G_n := \begin{cases}
            \{z : |\Re z - m_n| \leq \log r\}, & n\mod 4 = 0, \\
            \{z : 1/r < |z - m_n| < r\}, & n\mod 4 = 1, \\
            D + m_n, & n\mod 4 = 2, \\
            E_n, & n\mod 4 = 3, \end{cases} \]
it is clear from the definitions that $G_n\subset E_n$ for all $n\geq 0$. Hence, it follows by Equations (\ref{eq:aprox1}) to (\ref{eq:aprox4}) that $g(G_n)\subset G_{n+1}$, and so by Montel's theorem each $G_n$ belongs to a Fatou component $U_n$ such that $g(U_n)\subset U_{n+1}$. Furthermore, by the same token, $H_0$ belongs to an attracting Fatou component $V$, and every $L_n$ is contained in a pre-image of $V$. Therefore, the $U_n$ are distinct Fatou components (in particular, $U_0$ is a wandering domain with orbit $U_0, U_1, \ldots)$. It remains to show that each $U_n$ has the desired topological properties. To this end, we make two claims.

\begin{claim} \label{claim:bounded}
If $\epsilon > 0$ is chosen sufficiently small, then $\lambda$, $R$, $R'$, and $x_n$ can be chosen (``independently of $\epsilon$'') so that $U_n\subset E_n$ for $n\mod 4\in \{0,1,2\}$.
\end{claim}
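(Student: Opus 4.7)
My plan is to trap $U_n$ inside $E_n$ by combining Fatou-component barriers from an attracting basin of $g$ with a contradiction argument driven by the model map. The approximation inequalities make $|g(z)| < \sum_n \epsilon_n^3 \ll 1$ on $H_0$, so $g$ maps $H_0$ into itself and admits an attracting fixed point near $0$ whose basin $V$ contains $H_0$. The same estimates applied on $H_n$ and $L_n$ (which lie in $H_m$ for all $m\geq n$) give $g(H_n), g(L_n)\subset V$, hence $H_n, L_n\subset V$. Since $V$ is a Fatou component distinct from the wandering $U_n$ and each $L_n$ is a full vertical line, the connected set $U_n\supset G_n$ is confined to $\{l_n < \Re z < x_n\}\cap\{\Re z > x_{n-1}\}$.

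For $n\bmod 4 = 0$ this horizontal trapping essentially is the claim: choosing $x_n$ and $x_{n-1}$ close to $m_n\pm\log(2R')$, with $R'$ slightly enlarged within its allowable range, yields $\{l_n < \Re z < x_n\}\subset E_n$. For $n\bmod 4 = 1$, where $E_n = \{|z - m_n|\leq R\}$, I would argue by contradiction. If $U_n$ meets $\partial E_n$, then on this circle $|\psi(z - m_n)| \geq |\lambda|(R - 1/R)$, and fixing $R$ large enough (independently of $\epsilon$) so that $|\lambda|(R - 1/R) > 1 + \delta$ puts $g(\partial E_n)$ strictly outside $E_{n+1}$, provided the $O(\epsilon^3)$ approximation error is dominated by the positive slack. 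This contradicts $g(U_n)\subset U_{n+1}\subset E_{n+1}$, with the latter inclusion being the $n\bmod 4 = 2$ case of Claim~\ref{claim:bounded} — so the two inclusions must be established together by a joint induction along the orbit.

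The case $n\bmod 4 = 2$ is the main obstacle, because $\phi(\zeta) = 2\zeta/(\zeta^2 + 1)$ has poles $\pm i$ inside $E_n$, so $|\Re\phi|$ is not bounded below by $\log r$ everywhere on $\partial E_n$ and the modulus argument of the previous case fails. The plan here is to use the conformal structure of $\phi$ instead: by construction $D$ is the unique component of $\phi^{-1}(\{|\Re z| < \log r - \epsilon\})$ inside $\mathbb{D}$ and $\phi|_D$ is a conformal isomorphism onto this strip, so combined with the $O(\epsilon^3)$ approximation, $g^{-1}(E_{n+1})\cap(\mathbb{D} + m_n)$ is a small perturbation of $G_n = D + m_n$, forcing the component of $g^{-1}(U_{n+1})$ through $G_n$ to stay inside $\mathbb{D} + m_n\subset E_n$. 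For the remaining part of $\partial E_n$ one uses that $\phi|_{|\zeta| = 1}$ is real and equal to $\sec\theta$ with $|\sec\theta|\geq 1 > \log r$ away from the poles, which by continuity persists on the nearby circle $|\zeta| = 1 + \delta$ outside small arcs around $\pm i$; inside those small arcs the poles $\pm i + m_n\in J(g)\cap\intr(E_n)$ and their pre-image structure block any attempt for $U_n$ to reach $\partial E_n$ there. Throughout, the parameters $\lambda$, $R$, $R'$, $x_n$ are fixed in advance with every strict inequality holding by a definite margin, so the $O(\epsilon^3)$ approximation error is absorbed by taking $\epsilon$ sufficiently small at the end.
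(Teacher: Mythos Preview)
Your broad strategy --- using the attracting basin $V$ and the lines $L_n\subset V$ as Fatou barriers --- matches the paper, but your implementation diverges from the paper's in a way that leaves a genuine gap in the case $n\bmod 4 = 2$.

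The paper does not run separate forward-contradiction arguments in each residue class. Instead it places only the two lines $L_{4k+2}$ and $L_{4k+3}$ tightly on either side of $E_{4k+3}$ (with $|x_{4k+j} - m_{4k+3}| < \log r + \epsilon$ for $j=2,3$) and then \emph{pulls these lines back} by $g$ three times. The parameters $\lambda$, $R$, $R'$ are tuned precisely so that an $\epsilon$-neighbourhood of $E_{4k+3}$ pulls back under the model map successively into $E_{4k+2}$, $E_{4k+1}$, $E_{4k}$; a Rouch\'e-type argument then says the same holds for the pre-images under $g$. Those pre-image curves lie in the grand orbit of $V$ and hence separate each $G_n$ from $\partial E_n$, for all three residue classes at once.

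Your direct argument on $\partial E_n$ for $n\bmod 4 = 2$ fails at the points $(1+\delta)(\pm i) + m_n$. Writing $\zeta = z - m_n$, one computes
\[
\phi\big((1+\delta)i\big) \;=\; \frac{2(1+\delta)i}{1 - (1+\delta)^2} \;=\; -\,\frac{2(1+\delta)}{\delta(2+\delta)}\,i,
\]
which is purely imaginary. Hence $\Re\big(g(z) - m_{n+1}\big)\approx 0 < \log r$ there, so $g$ sends these boundary points \emph{into} the strip $E_{n+1}$ and no contradiction with $g(U_n)\subset U_{n+1}$ is produced. Your fallback --- that the poles $m_n\pm i\in J(g)$ ``block'' access to these arcs --- does not follow: the poles lie strictly in the interior of $E_n$, and nothing you have established prevents the connected open set $U_n$ from reaching $\partial E_n$ by going around them. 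The paper closes exactly this hole via the pullback device: the component of $g^{-1}(L_{4k+2}\cup L_{4k+3})$ inside $E_{4k+2}$ consists of two analytic arcs \emph{joining} the poles $m_{4k+2}\pm i$ and enclosing $G_{4k+2}$ (a fact reused in the proof of the next claim), so the barrier passes \emph{through} the poles rather than relying on them as isolated obstacles.

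A secondary issue: for $n\bmod 4 = 0$ your horizontal trapping gives $U_n$ inside the open strip bounded by $L_{n-1}$ and $L_n$, but since the setup requires $L_m\cap F_j = \emptyset$ for all $m,j$, these lines must lie strictly \emph{outside} $E_n$; the resulting strip is then strictly wider than $E_n$, and enlarging $R'$ after the fact to absorb them makes the lines intersect $F_n$. The paper avoids this too by using pullbacks of $L_{4k+2}, L_{4k+3}$ rather than the lines adjacent to $E_{4k}$.
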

\begin{proof}
This is where we will use the exact placement of the $L_n$ next to $E_{4k+3}$, $k\geq 0$, and where we will explain the further requirements on $R$ and $\epsilon$. An important point about this procedure is that, whenever a choice is made regarding $\lambda$, $R$, or $R'$, we can continue to shrink $\epsilon$ without affecting this choice. It is in this sense that we say that they are ``independent of $\epsilon$''.

First, we ask that $\log r + 2\epsilon < 1$. We now fix $\lambda\in\Cx$ as explained before: it is such that $\psi\left(\{z : 1/r < |z| < r\}\right)\subset \{z\in D : \dist(z, \partial D) > \epsilon\}$. Now, notice that (since $|\lambda| < 1/2)$ $\psi^{-1}\left(\{z : |z| < 1 + \epsilon\}\right)$ is a doubly connected domain surrounding the origin and bounded away from both zero and infinity. Therefore, there exists an annulus $A_R := \{1/R < |z| < R\}$ such that
\[ \text{$|\psi(z)| > 1 + \epsilon$ for every $z\in\partial A_R$,}  \]
and we can decrease $\epsilon$ without affecting the choice of $R$ as promised. We shrink $\epsilon$ so that $1/R > \epsilon$, and then pick $R'$ such that $1/R' + \epsilon < 1/R$ and $R' - \epsilon > R$.

After all that trouble, it follows that\footnote{This is essentially an application of Rouché's theorem; see \cite[Theorem 18]{Ahl79} and the following corollary.}, if $\gamma$ is a curve in an $\epsilon$-neighbourhood of $E_{4k+3}$, $k\geq 0$, we can ``pull it back'' by $g$ at least three times while remaining within the sets $E_{4k+i}$, $i = 0, 1, 2$ (see Figure \ref{fig:pullback}). Hence, if the values of $x_{4k+2}$ and $x_{4k+3}$ are such that $m_{4k+3} - x_{4k+2} < \log r + \epsilon$ and $x_{4k+3} - m_{4k+3} < \log r + \epsilon$, then $L_{4k+2}$ and $L_{4k+3}$ will be pulled back as shown in Figure \ref{fig:pullback}. Since every $L_n$ belongs to the orbit of the attracting domain $V$, all these pullbacks will be in the same attracting grand orbit, and thus cannot intersect any of the $U_n$. The conclusion follows.

\begin{figure}[!h]
    \centering
    \includegraphics[width=0.9\columnwidth]{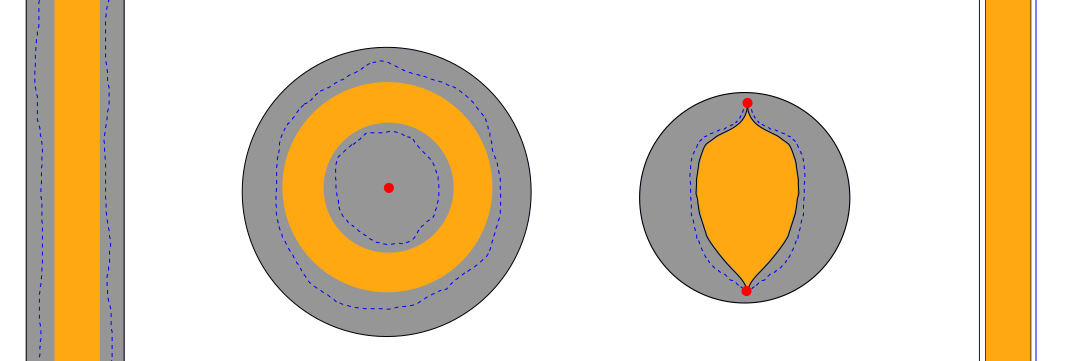}
    \caption{With (mostly) the same colour scheme as Figure \ref{fig:sets}, we have added pre-images of $L_2$ and $L_3$ in dashed blue ($L_2$ and $L_3$ are highlighted in solid blue). If the sets and constants are chosen as specified in Claim \ref{claim:bounded}, then these pre-images will persist even after the approximation.}
    \label{fig:pullback}
\end{figure}
\end{proof}

This suffices for showing the boundedness or unboundedness of the wandering domains $U_n$; next, we tackle the problem of connectivity.

\begin{claim} \label{claim:conn}
For any sufficiently small $\epsilon > 0$, the domains $U_{4k+2}$, $k\geq 0$, are simply connected.
\end{claim}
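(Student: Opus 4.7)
The main topological concern is that $U_{4k+2}$ might wrap around one or both of the poles $p_\pm := m_{4k+2}\pm i$ of $g$, which lie on $\partial U_{4k+2}$ since they belong to $\overline{G_{4k+2}}\cap J(g)$. My plan is to rule out such holes by showing that $g|_{U_{4k+2}}\colon U_{4k+2}\to U_{4k+3}$ is a conformal isomorphism, and then to deduce simple connectivity of $U_{4k+2}$ from that of $U_{4k+3}$.

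Properness of $g|_{U_{4k+2}}$ is immediate from Claim \ref{claim:bounded}: since $U_{4k+2}\subset E_{4k+2}$ is bounded and the only singular boundary points of $g$ are the two poles $p_\pm$ (where $g\to\infty\in\partial U_{4k+3}$), preimages of compact subsets of $U_{4k+3}$ are compact in $U_{4k+2}$. The heart of the argument is then to show that $d := \deg(g|_{U_{4k+2}}) = 1$ and $\delta_g(U_{4k+2}) = 0$. For this I would rely on the approximation $|g(z) - \tilde g(z)| < \epsilon$ on $E_{4k+2}$, with $\tilde g(z) := \phi(z-m_{4k+2}) + m_{4k+3}$, coupled with the pivotal fact that $\phi|_{\mathbb{D}}$ is a conformal isomorphism onto $\Chat\setminus((-\infty,-1]\cup[1,\infty))$ whose critical points $\pm 1$ sit on $\partial\mathbb{D}$. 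The decisive observation is that $\tilde g$ sends $\partial(\mathbb{D}+m_{4k+2})$ into the real half-lines $(-\infty, m_{4k+3}-1]\cup [m_{4k+3}+1, \infty)$, which by the careful placement of $(l_n)$ and $(m_n)$ lie at positive distance from a test point $w_0 := m_{4k+3}\in U_{4k+3}$. A Rouché argument applied to $g-w_0$ and $\tilde g-w_0$ on $\partial(\mathbb{D}+m_{4k+2})$ then yields exactly one preimage of $w_0$ for $g$ inside $\mathbb{D}+m_{4k+2}$, matching the unique preimage $m_{4k+2}$ of $\tilde g$. A parallel consideration on the thin annular shell $E_{4k+2}\setminus\overline{\mathbb{D}+m_{4k+2}}$ — where $\tilde g$ continues to take values near the forbidden half-lines away from narrow sectors at $p_\pm$, which are in turn cut off from $G_{4k+2}$ by the local pole behaviour — excludes further preimages in the component of $g^{-1}(U_{4k+3})$ containing $G_{4k+2}$. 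Hence $d = 1$, and the absence of critical points of $\phi$ in $\mathbb{D}$ forces $\delta_g(U_{4k+2}) = 0$.

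Lemma \ref{lem:bolsch}(i) then yields $c(U_{4k+2}) = c(U_{4k+3})$, reducing the claim to the simple connectivity of $U_{4k+3}$. I would handle this in parallel: an analogous (indeed easier) argument using the globally injective affine approximation $\sigma$ shows that $g|_{U_{4k+3}}$ is a conformal isomorphism onto $U_{4k+4}$, and $U_{4k+4}\subset E_{4k+4}$ is simply connected because $E_{4k+4}$ is a vertical strip and the exponential approximation $\tau$ has no poles or critical points in $\Cx$. The hardest part of the whole argument is the Rouché degree computation, which forces one further shrinking of $\epsilon$ to ensure that the approximation error stays smaller than the positive gap between $\tilde g(\partial(\mathbb{D}+m_{4k+2}))$ and the chosen test point $w_0$.
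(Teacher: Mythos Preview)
Your approach via degree counting and Riemann--Hurwitz is genuinely different from the paper's: the paper argues directly with the homotopy criterion of \cite[Lemma~2]{RS08}, showing that any non-null-homotopic loop in $U_{4k+2}$ would, under iteration, eventually have to surround a pole of $g$, and then ruling this out by tracking which poles can possibly be surrounded at each stage of the orbit (the decisive step being that $g$ has no preimage of $m_{4j+1}$ inside $E_{4j}$, since $|g(z)-m_{4j+1}|>1/R'-\epsilon$ there).

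The gap in your argument is the terminal step. You reduce $c(U_{4k+2})=1$ to $c(U_{4k+4})=1$, but your justification for the latter---``$E_{4k+4}$ is a vertical strip and $\tau$ has no poles or critical points''---does not establish simple connectivity of the subdomain $U_{4k+4}$. A subdomain of a simply connected region need not be simply connected, and the absence of critical points only tells you that $g:U_{4k+4}\to U_{4(k+1)+1}$ is an unbranched covering. Since this covering has \emph{infinite} degree (it is modelled on an exponential), Lemma~\ref{lem:bolsch}(i) does not apply, and Lemma~\ref{lem:bolsch}(ii) gives no information unless $c(U_{4(k+1)+1})\geq 3$, which you do not know. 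Nor can you classify the cover without already knowing the connectivity of the base. Your cascade therefore never closes: for each $k$ you have reduced the claim to $c(U_{4(k+1)})=1$, which is an assertion of the same kind with no independent anchor. Some form of the paper's homotopy argument (or an equivalent device showing directly that no loop in one of the $U_n$ can eventually surround a pole) still seems to be required.

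Two smaller technical points. First, your Rouch\'e contour $\partial(\mathbb{D}+m_{4k+2})$ passes through both poles $p_\pm$, so you must indent the contour or invoke a meromorphic version of Rouch\'e; this is routine but should be said. Second, Claim~\ref{claim:bounded} only asserts $U_n\subset E_n$ for $n\bmod 4\in\{0,1,2\}$; the domain $U_{4k+3}$ is unbounded and is only known to lie between the lines $L_{4k+2}$ and $L_{4k+3}$, a strip slightly wider than $E_{4k+3}$. Thus both your appeal to the affine approximation on $U_{4k+3}$ and the properness of $g|_{U_{4k+3}}$ need additional justification.
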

\begin{proof}
We will derive the argument for $U_2$, which is the case $k = 0$; any other $k$ follows an analogous argument. First, it is clear from the proof of Claim \ref{claim:bounded} that $g^{-1}(L_2\cup L_3)$ consists of two analytic arcs that connect the poles of $g$ at $m_2\pm i$, surrounding $G_2 = D + m_2$ and hence $U_2$. Therefore, no closed curve $\gamma\subset U_2$ can surround the poles of $g$; nevertheless, by \cite[Lemma 2]{RS08}, any such $\gamma$ that is not null-homotopic in $U_2$ must have an iterate $g^m(\gamma)$ that surrounds a pole of $g$. The only hope for $\gamma$, then, is to have an iterate in one of the Fatou components $U_{4k+1}$ for some $k\geq 1$ that surrounds the pole of $g$ at $m_{4k+1}$.

However, such an iterate would require $g^{m-1}(\gamma)$ to be a closed curve in $U_{4k}$, which must (since $g$ is holomorphic on $E_{4k}\supset U_{4k}$) surround a pre-image $z^*\in E_{4k}$ of $m_{4k+1}$. Still, by the triangle inequality (with $\tau(z) = \exp\left((z - m_{4k})/2\right) + m_{4k+1}$) and the error bound for $g$ on $E_{4k}$, we have
\[ \text{$|g(z) - m_{4k+1}| \geq |\tau(z) - m_{4k+1}| - |\tau(z) - g(z)| > 1/R' - \epsilon$ for every $z\in E_{4k}$.} \]
If we choose $\epsilon$ such that $\epsilon < (2R')^{-1}$, then the right-hand side of this inequality is greater than $(2R')^{-1}$, which means that no $z\in E_{4k}$ can reach $m_{4k+1}$ through $g$. We conclude that $U_2$ cannot contain any closed curve $\gamma$ that is not null-homotopic in $U_2$, and so $U_2$ is simply connected.
\end{proof}

A similar argument shows that the domains $U_{4k}$ and $U_{4k+3}$, $k\geq 0$, are also simply connected. Finally, the domains $U_{4k+1}$ are at least doubly connected, for each surrounds a pole of $g$ at $m_{4k+1}$; if the connectivity is greater than two, then (by Lemma \ref{lem:bolsch}) $U_{4k}$ is infinitely connected, which is a contradiction. This concludes the proof of Theorem \ref{thm:ex}.

\begin{remark}
It remains to be seen if the construction above can be simplified to yield a connectivity sequence of period less than four. However (and this was brought to my attention by Reem Yassawi), it \textit{can} be irregularly ``padded'' with strips and translations to yield a connectivity sequence that is not periodic at all.
\end{remark}

\section{Convergence to the boundary in multiply connected wandering domains} \label{sec:boundary}

In this section, we shall prove Theorem \ref{thm:classII}. Our starting point will be the same as in \cite{BEFRS19}: namely, the fact that for any hyperbolic region $D$ we have
\[ \rho_D(z)\to +\infty \Leftrightarrow \dist(z, \partial D) \to 0. \]

Since we will be working with $\widetilde{U}_n$, which is simply connected, we will also be able to use their elegant ``Harnack-type'' estimates \cite[Lemma 4.1]{BEFRS19}:

\begin{lemma} \label{lem:harnack}
Let $\Omega\subset \Cx$ be a simply connected hyperbolic domain. Then, for all $z$ and $w$ in $\Omega$,
\[ e^{-2d_\Omega(z, w)} \leq \frac{\rho_\Omega(z)}{\rho_\Omega(w)} \leq e^{2d_\Omega(z, w)}. \]
\end{lemma}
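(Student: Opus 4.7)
The plan is to reduce the inequality to the unit disk via the Riemann mapping theorem, where it follows from a clean computation with the Koebe distortion theorem. Fix $z, w \in \Omega$ and let $\phi: \mathbb{D} \to \Omega$ be a conformal isomorphism with $\phi(0) = w$; set $\zeta := \phi^{-1}(z)$ and $r := |\zeta|$. Since $\phi$ is a hyperbolic isometry, we have $d_\Omega(z, w) = d_\mathbb{D}(0, \zeta) = \log\frac{1+r}{1-r}$, and the conformal covariance $\rho_\Omega(\phi(\xi))\,|\phi'(\xi)| = \rho_\mathbb{D}(\xi)$ of the hyperbolic density lets me write
\[ \frac{\rho_\Omega(z)}{\rho_\Omega(w)} = \frac{\rho_\mathbb{D}(\zeta)}{\rho_\mathbb{D}(0)} \cdot \frac{|\phi'(0)|}{|\phi'(\zeta)|}. \]

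Next, I would handle each factor separately. For the first, using $\rho_\mathbb{D}(\xi) = 2/(1 - |\xi|^2)$ immediately gives $\rho_\mathbb{D}(\zeta)/\rho_\mathbb{D}(0) = 1/[(1-r)(1+r)]$. For the second, I would apply the classical Koebe distortion theorem to the normalised univalent function $(\phi - \phi(0))/\phi'(0)$ on $\mathbb{D}$, which yields the sharp inequality
\[ \frac{|\phi'(0)|}{|\phi'(\zeta)|} \leq \frac{(1+r)^3}{1-r}. \]
Multiplying the two estimates, the factor $(1-r)(1+r)$ in the denominator cancels against one $(1+r)$ in the numerator, and I obtain
\[ \frac{\rho_\Omega(z)}{\rho_\Omega(w)} \leq \frac{(1+r)^2}{(1-r)^2} = \left( \frac{1+r}{1-r} \right)^2 = e^{2\, d_\Omega(z, w)}, \]
which is the upper bound. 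The lower bound follows at once by swapping the roles of $z$ and $w$ and using the symmetry of $d_\Omega$.

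The main obstacle is not conceptual but arithmetic: the constant $2$ in the exponent is tight for this approach and only surfaces cleanly if the hyperbolic density on $\mathbb{D}$ is normalised so that $\rho_\mathbb{D}(0) = 2$ (so that $d_\mathbb{D}(0, r) = \log\frac{1+r}{1-r}$, rather than half of this). This is precisely the Beardon--Minda normalisation adopted in Subsection \ref{ssec:model}, so the computation lands on the stated constants without any rescaling, and the whole argument rests on a single invocation of the Koebe distortion theorem together with the identity $e^{d_\mathbb{D}(0,r)} = (1+r)/(1-r)$.
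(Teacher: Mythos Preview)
Your proof is correct and follows precisely the approach the paper indicates: the paper does not give a full proof of this lemma but cites it from \cite{BEFRS19} and remarks that ``the proof of Lemma~\ref{lem:harnack} applies the Koebe distortion theorem to the Riemann map from $\mathbb{D}$ to $\Omega$,'' which is exactly what you do. Your arithmetic is clean and the normalisation issue you flag is handled correctly.
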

\begin{remark}
The proof of Lemma \ref{lem:harnack} applies the Koebe distortion theorem to the Riemann map from $\mathbb{D}$ to $\Omega$. As such, similar estimates \textit{can} be obtained for multiply connected domains by applying the distortion theorems in \cite{Zhe01b} to a universal covering map -- provided we assume that $\partial \Omega$ is uniformly perfect, and replace the universal factor of $2$ by a constant depending on $\Omega$. See also \cite{BP78} for other properties of the hyperbolic metric of simply connected domains that generalise to multiply connected ones under the assumption of uniform perfectness and domain-dependent constants.
\end{remark}

Now, notice that the first claim of Theorem \ref{thm:classII} (that all orbits go to the boundary or stay away from it together) follows from the second one (that, if an orbit approaches a certain sequence in $\partial U_n$, so does every other orbit). As such, we can prove Theorem \ref{thm:classII} by showing the following.

\begin{theorem} \label{thm:bndry}
Let $U$ be a wandering domain of a meromorphic function $f$. For $z\in U_n$, let
\[ \delta_n(z) := \mathrm{dist}\left(z, \partial \widetilde{U}_n\right). \]
Let $z_0\in U$ and, for $n\in\mathbb{N}$, let $w_n\in\partial \widetilde{U}_n$ be such that \[ \delta_n\left(f^n(z_0)\right) = |f^n(z_0) - w_n|. \]
If there exists a subsequence $(m_k)_{n\in\mathbb{N}}$ such that $\delta_{m_k}\left(f^{m_k}(z_0)\right)\to 0$, then $|f^{m_k}(z_1) - w_{m_k}| \to 0$ as $k\to +\infty$ for every other $z_1\in U$.
\end{theorem}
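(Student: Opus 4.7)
The plan is to use the simply connected domain $\widetilde{U}_n$ as the working environment (so that Lemma \ref{lem:harnack} and the Koebe $\tfrac14$-theorem are directly available), and transport a bound on Euclidean distance between orbits via a hyperbolic bound that comes from Schwarz--Pick. The key quantitative relation I want to establish is
\[ |f^n(z_1) - f^n(z_0)| \leq C \cdot \delta_n\left(f^n(z_0)\right) \]
for a constant $C$ independent of $n$; once this holds, combining it with $|f^{m_k}(z_0) - w_{m_k}| = \delta_{m_k}(f^{m_k}(z_0)) \to 0$ and the triangle inequality immediately gives $|f^{m_k}(z_1) - w_{m_k}| \to 0$.

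The first step is to note that $f : U_n \to U_{n+1}$ is holomorphic, so the Schwarz--Pick lemma yields
\[ d_{U_n}\!\left(f^n(z_0), f^n(z_1)\right) \leq d_U(z_0, z_1) =: M < +\infty. \]
Since $\widetilde{U}_n \supseteq U_n$, the inclusion contracts the hyperbolic metric, so $d_{\widetilde{U}_n}(f^n(z_0), f^n(z_1)) \leq M$ as well. Crucially, $\widetilde{U}_n$ is simply connected, so Lemma \ref{lem:harnack} applies: along a hyperbolic geodesic $\gamma_n$ in $\widetilde{U}_n$ joining $f^n(z_0)$ to $f^n(z_1)$ (of hyperbolic length $\leq M$), any point $\zeta \in \gamma_n$ satisfies $d_{\widetilde{U}_n}(\zeta, f^n(z_0)) \leq M$ and therefore
\[ \rho_{\widetilde{U}_n}(\zeta) \geq e^{-2M}\, \rho_{\widetilde{U}_n}\!\left(f^n(z_0)\right). \]

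The second step is the Euclidean estimate. Integrating $|d\zeta| = \rho_{\widetilde{U}_n}(\zeta)^{-1}\,\rho_{\widetilde{U}_n}(\zeta)|d\zeta|$ along $\gamma_n$ and using the density bound above,
\[ |f^n(z_1) - f^n(z_0)| \leq \int_{\gamma_n} |d\zeta| \leq \frac{e^{2M}}{\rho_{\widetilde{U}_n}(f^n(z_0))} \int_{\gamma_n} \rho_{\widetilde{U}_n}(\zeta)\,|d\zeta| \leq \frac{M\, e^{2M}}{\rho_{\widetilde{U}_n}(f^n(z_0))}. \]
Now I invoke the Koebe $\tfrac14$-theorem (or, equivalently, the standard two-sided estimate for the hyperbolic density of a simply connected domain) to get $\rho_{\widetilde{U}_n}(f^n(z_0)) \geq \tfrac{1}{2\delta_n(f^n(z_0))}$, hence
\[ |f^n(z_1) - f^n(z_0)| \leq 2M e^{2M}\, \delta_n\!\left(f^n(z_0)\right), \]
which is the displayed inequality with $C = 2Me^{2M}$, depending only on $z_0, z_1$ via $M$.

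The final step is assembly: along the subsequence $(m_k)$ we have $\delta_{m_k}(f^{m_k}(z_0)) \to 0$, so both $|f^{m_k}(z_1) - f^{m_k}(z_0)|$ and $|f^{m_k}(z_0) - w_{m_k}|$ tend to zero, giving $|f^{m_k}(z_1) - w_{m_k}| \to 0$ as required. The only subtle point — and the step I would check most carefully — is the legitimacy of passing from $U_n$ to $\widetilde{U}_n$: one must ensure $\widetilde{U}_n$ is actually hyperbolic (i.e., that $\mathbb{C}\setminus \widetilde{U}_n$ is nonempty), but this follows because $\partial \widetilde{U}_n$ is nonempty whenever $\widetilde{U}_n \neq \mathbb{C}$, and the hypothesis that $\delta_{m_k}(f^{m_k}(z_0))$ is well-defined and tends to zero forces $\partial \widetilde{U}_n \neq \emptyset$ for those $n$. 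Everything else is a routine application of Harnack/Koebe estimates together with Schwarz--Pick, with no need to address the multiply connected structure of $U_n$ itself.
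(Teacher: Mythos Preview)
Your proof is correct and follows essentially the same approach as the paper's: pass to the simply connected hull $\widetilde{U}_n$, use Schwarz--Pick (on $f^n:U\to U_n$ and on the inclusion $U_n\hookrightarrow\widetilde{U}_n$) to bound the hyperbolic distance by $M=d_U(z_0,z_1)$, apply Lemma~\ref{lem:harnack} along a geodesic to control the density, convert to a Euclidean bound via the Koebe estimate $\rho_{\widetilde{U}_n}\geq 1/(2\delta_n)$, and finish with the triangle inequality. The only cosmetic difference is that you fix $M$ at the outset while the paper introduces $C_k=d_{\widetilde{U}_{m_k}}(f^{m_k}(z_0),f^{m_k}(z_1))$ and bounds it by $d_U(z_0,z_1)$ at the end; your explicit remark on the hyperbolicity of $\widetilde{U}_n$ is a welcome clarification that the paper leaves implicit.
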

\begin{proof}
We will show that $|f^{m_k}(z_0) - f^{m_k}(z_1)|\to 0$, whence the theorem follows by the triangle inequality. To that end, let $\gamma_k\subset \widetilde{U}_{m_k}$ be a distance-minimising geodesic arc joining $f^{m_k}(z_0)$ to $f^{m_k}(z_1)$. Since it is distance-minimising, every $w\in \gamma_k$ satisfies
\[ d_{\widetilde{U}_{m_k}}\left((w, f^{m_k}(z_0)\right) \leq d_{\widetilde{U}_{m_k}}\left((f^{m_k}(z_1), f^{m_k}(z_0)\right) =: C_k, \]
and therefore
\[ C_k = \int_{\gamma_k} \rho_{\widetilde{U}_{m_k}}(z)\,|dz| \geq e^{-2C_k}\rho_{\widetilde{U}_{m_k}}\left(f^{m_k}(z_0)\right)\int_{\gamma_k}\,|dz| = e^{-2C_k}\rho_{\widetilde{U}_{m_k}}\left(f^{m_k}(z_0)\right)\ell_{\Cx}(\gamma_k) \]
by Lemma \ref{lem:harnack}, where $\ell_{\Cx}$ denotes the Euclidean length of a curve. This can be rearranged to yield
\[ |f^{m_k}(z_0) - f^{m_k}(z_1)| \leq \ell_{\Cx}(\gamma_k) \leq \frac{C_ke^{2C_k}}{\rho_{\widetilde{U}_{m_k}}\left(f^{m_k}(z_0)\right)}, \]
and we apply standard estimates on the hyperbolic density in simply connected domains \cite[Theorem 4.3]{CG93} to obtain
\[ |f^{m_k}(z_0) - f^{m_k}(z_1)| \leq 2C_ke^{2C_k}\delta_{m_k}\left(f^{m_k}(z_0)\right). \]
Now recall that, since $U_{m_k}\subset \widetilde{U}_{m_k}$, we have
\[ C_k = d_{\widetilde{U}_{m_k}}\left(f^{m_k}(z_0), f^{m_k}(z_1)\right) \leq d_{U_{m_k}}\left(f^{m_k}(z_0), f^{m_k}(z_1)\right) \leq d_U(z_0, z_1) \]
by the Schwarz-Pick lemma applied twice (first to the inclusion $\iota:U_{m_k}\to \widetilde{U}_{m_k}$, and then to $f^{m_k}:U\to U_{m_k}$). Thus,
\[ |f^{m_k}(z_0) - f^{m_k}(z_1)| \leq 2d_U(z_0, z_1)e^{2d_U(z_0, z_1)}\delta_{m_k}\left(f^{m_k}(z_0)\right), \]
and we see that the right-hand side goes to zero by hypothesis as $k\to+\infty$.
\end{proof}

It's a different question whether these different behaviours can \textit{occur} for multiply connected wandering domains. Here, as with the hyperbolic metric, we see remarkable rigidity.

\begin{cor}
Let $U$ be a Baker wandering domain of a transcendental meromorphic function $f$. Then,
\[ \liminf_{n\to+\infty} \dist\left(f^n(z), \partial\widetilde{U}_n\right) > 0 \]
for every $z\in U$.
\end{cor}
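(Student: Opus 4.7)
The plan is to argue by contradiction, combining the rigidity of Theorem \ref{thm:bndry} with the non-constancy of the harmonic function $h$ defined in Equation (\ref{eq:h}). Suppose, for contradiction, that some $z\in U$ satisfies $\liminf_{n\to+\infty}\dist(f^n(z),\partial\widetilde{U}_n)=0$, so that along some subsequence $m_k\to+\infty$ there exist $w_k\in\partial\widetilde{U}_{m_k}$ with $|f^{m_k}(z)-w_k|\to 0$. Theorem \ref{thm:bndry} then tells us that for every other $z'\in U$ we also have $|f^{m_k}(z')-w_k|\to 0$, and so by the triangle inequality
\[ |f^{m_k}(z) - f^{m_k}(z')| \to 0 \quad\text{as } k\to+\infty. \]

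To derive a contradiction I would choose $z'$ so that the two orbits separate in modulus. Take $z$ itself as the base point for $h$, so that $h(z;z,U) = 1$ by definition. Since $h$ is positive, harmonic and non-constant on the connected domain $U$ (this was recorded right after Equation (\ref{eq:h})), the maximum principle yields some $z'\in U$ with $L := h(z'; z, U) > 1$; otherwise $h \leq 1$ on $U$ with equality attained at $z$, forcing $h\equiv 1$. Fix $\epsilon \in (0, L-1)$; the definition of $h$ gives, for all sufficiently large $n$,
\[ |f^n(z')| > |f^n(z)|^{L-\epsilon}. \]
Since $U$ is a Baker wandering domain we have $U_n\to\infty$ and in particular $|f^n(z)|\to +\infty$, so the right-hand side (and hence $|f^n(z')|$) tends to infinity strictly faster than $|f^n(z)|$. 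The reverse triangle inequality then gives $|f^n(z) - f^n(z')| \to +\infty$, contradicting the conclusion of the previous paragraph.

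The only step that demands any care is the extraction of $z'$ with $h(z'; z, U)>1$, but this is a direct consequence of the non-constancy of $h$ and the maximum principle; everything else is an unwrapping of Theorem \ref{thm:bndry} and the defining asymptotics of $h$. No further input about the geometry of $\widetilde{U}_n$ is required, which is why the statement holds for Baker wandering domains of any connectivity.
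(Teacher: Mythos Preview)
Your argument is correct, but it proves a slightly narrower statement than the corollary as written. The harmonic function $h$ of Equation~(\ref{eq:h}) is only shown to exist and to be non-constant under the standing hypothesis of Section~\ref{sec:dir}, namely that $f$ has a direct tract; the corollary itself does not carry that hypothesis. So as stated your proof establishes the result for Baker wandering domains of meromorphic functions with a direct tract, which covers entire functions and functions with finitely many poles but not the general meromorphic case.

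The paper's proof avoids $h$ altogether and is consequently more general. Instead of separating two orbits via their moduli, it fixes a simple closed curve $\gamma\subset U$ that is not null-homotopic (available because $U$ is multiply connected), and compares two bounds on the hyperbolic length of its iterates $\gamma_n$. On one hand $\ell_{U_n}(\gamma_n)\leq\ell_U(\gamma)$ by Schwarz--Pick; on the other hand the density estimate $\rho_{U_n}(s)\geq\rho_{\widetilde U_n}(s)\geq \bigl(2\dist(s,\partial\widetilde U_n)\bigr)^{-1}$ forces $\ell_{\Cx}(\gamma_n)\to 0$ once all orbits approach $\partial\widetilde U_n$. This contradicts the defining property of a Baker wandering domain, since $\gamma_n$ eventually surrounds the origin while $U_n\to\infty$, so $\ell_{\Cx}(\gamma_n)\to+\infty$. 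The trade-off is clear: your route is cleaner and exploits the fine structure developed in Section~\ref{sec:dir}, while the paper's route is more elementary and applies to arbitrary Baker wandering domains with no tract hypothesis. If you add the words ``with a direct tract'' to the statement, your proof stands as a perfectly good alternative.
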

\begin{proof}
Assume that there exists some $z_0\in U$ for which this is not true (in fact, in order to keep notation light, we will assume that $\dist(f^n(z_0), \partial\widetilde{U}_n)\to 0$; it is easy to see how to adapt the argument for the case where this happens only on a subsequence). Then, by Theorem \ref{thm:classII}, the same is true for all $z\in U$; we take now a simple closed curve $\gamma\in U$ that is not null-homotopic in $U$. By the Schwarz-Pick lemma, we have $\ell_{U_n}(\gamma_n) \leq \ell_{U_n}(f^n\circ\gamma)\leq \ell_U(\gamma)$, where $\gamma_n$ stands for $f^n\circ\gamma$ traversed only once. Simultaneously, the estimates for hyperbolic density used in the proof of Theorem \ref{thm:bndry} yield
\[ \ell_{U_n}(\gamma_n) = \int_{\gamma_n} \rho_{U_n}(s)\,|ds| \geq \int_{\gamma_n} \frac{1}{2\dist(s, \partial\widetilde{U}_n)}\,|ds| \geq \frac{\ell_{\Cx}(\gamma_n)}{2\max\{\dist(s, \partial\widetilde{U}_n) : s\in\gamma_n\}}; \]
we see that the only way for the right-hand side to remain bounded\footnote{There may be some issues with the uniformness of the convergence of $\dist(f^n\circ\gamma, \partial\widetilde{U}_n)$ to $0$, but we can apply Egorov's theorem to find a positive measure subset of $\gamma$ (relative to arc-length measure) where convergence \textit{is} uniform.} is to have $\ell_{\Cx}(\gamma_n)\to 0$. This applies to every closed curve in $U$, and in particular to the curve surrounding the complementary component of $U$ containing the origin. This is a contradiction, for as $U$ is a Baker wandering domain we have $U_n\to \infty$ while surrounding the origin -- which clearly implies $\ell_{\Cx}(\gamma_n)\to +\infty$ at least for this particular curve.
\end{proof}


\begin{thebibliography}{26}
\providecommand{\natexlab}[1]{#1}
\providecommand{\url}[1]{\texttt{#1}}
\expandafter\ifx\csname urlstyle\endcsname\relax
  \providecommand{\doi}[1]{doi: #1}\else
  \providecommand{\doi}{doi: \begingroup \urlstyle{rm}\Url}\fi

\bibitem[1]{Ahl79}
L.~V. Ahlfors.
\newblock \emph{Complex Analysis}.
\newblock McGraw-Hill, 3rd edition, 1979.

\bibitem[2]{Bak76}
I.~N. Baker.
\newblock An entire function which has wandering domains.
\newblock \emph{J. Austral. Math. Soc. Ser. A}, 22:\penalty0 173--176, 1976.

\bibitem[3]{BS06}
H.~S. Bear and W.~Smith.
\newblock A tale of two conformally invariant metrics.
\newblock \emph{J. Math. Anal. Appl.}, 318:\penalty0 498--506, 2006.

\bibitem[4]{BM06}
A.~F. Beardon and D.~Minda.
\newblock The hyperbolic metric and geometric function theory.
\newblock In \emph{Proceedings of the International Workshop on Quasiconformal
  Mappings and their Applications}. New Delhi Alpha Science International,
  2006.

\bibitem[5]{BP78}
A.~F. Beardon and C.~Pommerenke.
\newblock The {Poincaré} metric of plane domains.
\newblock \emph{J. London Math. Soc.}, 18:\penalty0 475--483, 1978.

\bibitem[6]{BEFRS19}
A.~M. Benini, V.~Evdoridou, N.~Fagella, P.~J. Rippon, and G.~M. Stallard.
\newblock Classifying simply connected wandering domains, 2019.
\newblock Available at \url{https://arxiv.org/abs/1910.04802}. To appear in the
  \textit{Mathematische Annalen}.

\bibitem[7]{BRS08}
W.~Bergweiler, P.~J. Rippon, and G.~M. Stallard.
\newblock Dynamics of meromorphic functions with direct or logarithmic
  singularities.
\newblock \emph{Proc. London Math. Soc.}, 97:\penalty0 368--400, 2008.

\bibitem[8]{BRS13}
W.~Bergweiler, P.~J. Rippon, and G.~M. Stallard.
\newblock Multiply connected wandering domains of entire functions.
\newblock \emph{Proc. London Math. Soc.}, 107:\penalty0 1261--1301, 2013.

\bibitem[9]{Bol99}
A.~Bolsch.
\newblock Periodic {Fatou} components of meromorphic functions.
\newblock \emph{Bull. London Math. Soc.}, 31:\penalty0 543--555, 1999.

\bibitem[10]{Bus10}
P.~Buser.
\newblock \emph{Geometry and Spectra of Compact Riemann Surfaces}.
\newblock Birkhäuser, 2010.

\bibitem[11]{CG93}
L.~Carleson and T.~W. Gamelin.
\newblock \emph{Complex Dynamics}.
\newblock Springer, 1993.

\bibitem[12]{Com11}
M.~Commerford.
\newblock Short separating geodesics for multiply connected domains.
\newblock \emph{Cent. Eur. J. Math.}, 9:\penalty0 984--996, 2011.

\bibitem[13]{Fat20}
P.~Fatou.
\newblock Sur les équations fonctionnelles.
\newblock \emph{Bull. Soc. Math. France}, 48:\penalty0 208--314, 1920.

\bibitem[14]{Gai87}
D.~Gaier.
\newblock \emph{Lectures in Approximation Theory}.
\newblock Birkhäuser, 1987.

\bibitem[15]{Hay64}
W.~K. Hayman.
\newblock \emph{Meromorphic Functions}.
\newblock Oxford University Press, 1964.

\bibitem[16]{Hay76}
W.~K. Hayman and P.~B. Kennedy.
\newblock \emph{Subharmonic Functions}, volume~1.
\newblock Academic Press, 1976.

\bibitem[17]{BKL90}
J.~Kotus I.~N.~Baker and Y.~Lü.
\newblock Iterates of meromorphic functions {II}: examples of wandering
  domains.
\newblock \emph{J. London Math. Soc.}, 42:\penalty0 267--278, 1990.

\bibitem[18]{KH95}
A.~Katok and B.~Hasselblatt.
\newblock \emph{Introduction to the Modern Theory of Dynamical Systems}.
\newblock Cambridge University Press, 1995.

\bibitem[19]{KS08}
M.~Kisaka and M.~Shishikura.
\newblock On multiply connected wandering domains of entire functions.
\newblock In P.~J. Rippon and G.~M. Stallard, editors, \emph{Transcendental
  Dynamics and Complex Analysis}, pages 217--250. Cambridge University Press,
  2008.

\bibitem[20]{Lee13}
J.~M. Lee.
\newblock \emph{Introduction to Smooth Manifolds}.
\newblock Springer, 2nd edition, 2013.

\bibitem[21]{RS08}
P.~J. Rippon and G.~M. Stallard.
\newblock On multiply connected wandering domains of meromorphic functions.
\newblock \emph{J. London Math. Soc.}, 77:\penalty0 405--423, 2008.

\bibitem[22]{RS11}
P.~J. Rippon and G.~M. Stallard.
\newblock Slow escaping points of meromorphic functions.
\newblock \emph{Trans. Amer. Math. Soc.}, 363:\penalty0 4171--4201, 2011.

\bibitem[23]{Six13}
D.~J. Sixsmith.
\newblock On fundamental loops and the fast escaping set.
\newblock \emph{J. London Math. Soc.}, 88:\penalty0 716--736, 2013.

\bibitem[24]{Tsu75}
M.~Tsuji.
\newblock \emph{Potential Theory in Modern Function Theory}.
\newblock Chelsea Publishing Company, 2nd edition, 1975.

\bibitem[25]{Zhe01}
J.-H. Zheng.
\newblock \emph{Dynamics of Transcendental Meromorphic Functions}.
\newblock Tsinghua University Press, 2001{\natexlab{a}}.

\bibitem[26]{Zhe01b}
J.-H. Zheng.
\newblock Uniformly perfect sets and distortion of holomorphic functions.
\newblock \emph{Nagoya Math. J.}, 164:\penalty0 17--33, 2001{\natexlab{b}}.

\end{thebibliography}
\end{document}